\begin{document}

\title{An inverse conductivity problem in multifrequency electric impedance tomography \vskip .3cm {\small\tt Dedicated to Masahiro Yamamoto for his sixtieth birthday}}
\titlerunning{Inverse conductivity problem}

\author{Jin Cheng\inst{1} \and Mourad Choulli \inst{2} \and Shuai Lu \inst{1}}
\authorrunning{Jin Cheng, Mourad Choulli and Shuai Lu} 
%
\tocauthor{Jin Cheng, Mourad Choulli, and Shuai Lu}
\institute{Shanghai Key Laboratory for Contemporary Applied Mathematics, \\ Key Laboratory of Mathematics for Nonlinear Sciences and \\ School of Mathematical Science, Fudan University, 200433 Shanghai, \email{jcheng@fudan.edu.cn, slu@fudan.edu.cn}
\and
Universit\'e de Lorraine, 34 cours L\'eopold, 54052 Nancy cedex, France \email{mourad.choulli@univ-lorraine.fr}}

\maketitle

\abstract{We deal with the problem of determining the shape of an inclusion embedded in a homogenous background medium. The multifrequency electrical impedance tomography is used to image the inclusion. For different frequencies, a current is injected at the boundary and  the resulting potential is measured. It turns out that the potential solves an elliptic equation in divergence form with discontinuous leading coefficient. For this inverse problem we aim to establish a logarithmic type stability estimate. The key point in our analysis consists in reducing the original problem to that of determining an unknown part of the inner boundary from a single boundary measurment. The stability estimate is then used to prove uniqueness results. We also provide an expansion of the solution of the BVP under consideration in  the eigenfunction basis of the Neumann-Poincar\'e operator associated to the Neumann-Green function.}


\section{Introduction}

We firstly proceed with the mathematical formulation of the problem under consideration. In order to specify the BVP satisfied by the potential, we consider $\Omega$ and $D$ two Lipschitz domains of $\mathbb{R}^n$, $n\ge 2$, so that $D\Subset \Omega$. Fix $k_0>0$ and define, for $k\in (0,\infty)\setminus\{k_0\}$, the function $\mathbf{a}_D$ on $\Omega$ by
\[
\mathbf{a}_D(k)=k_0+(k-k_0)\chi_D.
\]
Here $\chi_D$ is the characteristic function of the inclusion $D$, $k_0$ is the conductivity of the background medium and $k$ is the conductivity of the inclusion.

It is well known that in the present context the potential solves the BVP
\begin{equation}\label{1.1}
\left\{
\begin{array}{ll}
\mbox{div}\left(\mathbf{a}_D(k)\nabla u\right)=0\quad &\mbox{in}\; \Omega ,
\\
k_0\partial_\nu u=f &\mbox{on}\; \partial \Omega ,
\end{array}
\right.
\end{equation}
where $\partial_\nu$ is the derivative along the unit normal vector field $\nu$ on $\partial \Omega$ pointing outward $\Omega$.

Let
\[
V=\left\{ u\in H^1(\Omega );\; \int_{\partial \Omega}u(x)d\sigma(x)=0\right\}.
\]
Note that $V$ is a Hilbert space when it is endowed with the scalar product
\[
\mathfrak{a}(u,v)=\int_\Omega \nabla u\cdot \nabla v,\quad u,v\in V.
\]
We leave to the  reader to check that the norm induced by this scalar product is equivalent to the $H^1(\Omega )$-norm on $V$.

Pick $f\in L^2(\partial D)$ and $k\in (0,\infty)\setminus\{k_0\}$. Then it is not hard to see that, according to Lax-Milgram's lemma,  the BVP \eqref{1.1} possesses a unique variational solution $u_D(k)\in V$. That is $u_D(k)$ is the unique element of $V$ satisfying
\begin{equation}\label{1.2}
\int_\Omega \mathbf{a}_D(k)\nabla u_D(k)\cdot \nabla vdx=\int_{\partial \Omega}fvd\sigma (x),\quad v\in V.
\end{equation}

Recall that the trace operator $\mathfrak{t}:w\in C^\infty (\overline{\Omega})\mapsto u_{|\partial \Omega}\in C^\infty (\partial \Omega )$ is extended to a bounded operator, still denoted by $\mathfrak{t}$, from $H^1(\Omega )$ into $L^2(\partial \Omega )$.

Since we will consider conductivities varying with the frequency $\omega$, we introduce the map $k:\omega \in (0,\infty )\mapsto k(w)\in (0,\infty)\setminus\{k_0\}$. We assume in the sequel the condition
\[
\lim_{\omega \rightarrow \infty}k(\omega)=\infty .
\]

Let $\mathfrak{J}$ be a given subset of $(0,\infty)$. In the present work, we are mainly interested in determining the unknown subdomain $D$ from the boundary measurements
\[
\mathfrak{t}u_D(k(\omega )),\quad \omega \in \mathfrak{J}.
\]

Prior to state our main result, we introduce some notations and definitions. Let
\begin{align*}
&\mathbb{R}_+^n=\{x=(x',x_n)\in \mathbb{R}^n;\; x_n>0\},
\\
&Q=\{x=(x',x_n)\in \mathbb{R}^n;\; |x'|<1\; \mbox{and}\; |x_n|<1\},
\\
&Q_+=Q\cap \mathbb{R}^n_+,
\\
&Q_0=\{x=(x',x_n)\in \mathbb{R}^n;\; |x'|<1\; \mbox{and}\; x_n=0\}.
\end{align*}
Fix $0<\alpha <1$. We say that the bounded domain $\mathcal{U}$ of $\mathbb{R}^n$ is of class $C^{2,\alpha}$ with parameters $\varrho>0$ and $\aleph >0$ if for any $x\in \partial\mathcal{U}$ there exists a bijective map $\phi: Q\rightarrow B:=B_{\mathbb{R}^{n-1}}(x,\varrho)$, satisfying $\phi\in C^{2,\alpha} (\overline{Q})$, $\phi^{-1}\in C^{2,\alpha}(\overline{B})$ and
\[
\|\phi\|_{C^{2,\alpha} (\overline{Q})}+\|\phi^{-1}\|_{C^{2,\alpha} (\overline{B})}\le \aleph ,
\]
so that
\[
\phi (Q_+)=\mathcal{U}\cap B\quad \mbox{and}\quad \phi(Q_0)=B\cap \partial \mathcal{U}.
\]
If we substitute in this definition $C^{2,\alpha}$ by $C^{0,1}$ then we obtain the definition of a Lipschitz domain with parameters $\varrho$ and $\aleph$.

Let $\mathscr{D}_0(\varrho,\aleph)$ denote the set of  subdomains $D$ so that $D\Subset \Omega$, and  $\Omega \setminus\overline{D}$ is of class $C^{2,\alpha}$ with parameters $\varrho>0$ and $\aleph >0$.


Fix $D_0\Subset \Omega_0 \Subset \Omega$ and $\delta >0$. Consider then $\mathscr{D}_1(\varrho,\aleph ,\delta)$ the set of subdomains $D\Subset \Omega_0\Subset \Omega$ satisfying $D_0\Subset D$, $\Omega \setminus\overline{\Omega_0}$ and $\Omega_0\setminus\overline{D}$ are domains of class $C^{2,\alpha}$ with parameters $\varrho>0$ and $\aleph >0$,
\[
\mbox{dist}(D,\partial \Omega_0)\ge \delta ,
\]
and the following assumption holds
\begin{equation}\label{As}
B(x_0,d(x_0,\overline{D}))\subset \Omega \setminus\overline{D},\quad x_0\in \Omega_0\setminus \overline{D}.
\end{equation}

Henceforward, we make the assumption that $\Omega_0$, $D_0$ and $\delta$ are chosen in such a way that $\mathscr{D}_1(\varrho,\aleph ,\delta)$ is nonempty.

Define also $\mathscr{C}_0(\varrho, \aleph,\varrho_0,\aleph_0,\delta )$ as the set of couples $(D_1,D_2)$ so that $D_j\in \mathscr{D}_1(\varrho,\aleph,\delta) $, $j=1,2$, and $\Omega \setminus \overline{D_1\cup D_2}$ is a domain of class $C^{0,1}$ with parameters $\varrho_0>0$ and $\aleph_0 >0$.

Define the geometric distance $d_g^U$ on a bounded domain $U$ of $\mathbb{R}^n$ by
\[
d_g^U(x,y)=\inf\left \{ \ell (\psi ) ;\; \psi :[0,1]\rightarrow U \; \mbox{is Lipschitz path joining}\; x \; \mbox{to}\; y\right\},
\]
where
\[
\ell (\psi )= \int_0^1\left|\dot{\psi}(t)\right|dt
\]
is the length of $\psi$.

Note that, according to Rademacher's theorem, any Lipschitz continuous function $\psi :[0,1]\rightarrow D$ is almost everywhere differentiable with $\left|\dot{\psi}(t)\right|\le L$ a.e. $t\in [0,1]$, where $L$ is the Lipschitz constant of $\psi$. Therefore, $\ell (\psi )$ is well defined.

From \cite[Lemma 3.3]{CY}, we know that $d_g^U\in L^\infty (U \times U )$ whenever $U$ is of class $C^{0,1}$. Fix then $\mathfrak{b}>0$ and define $\mathscr{C}_1(\varrho, \aleph,\varrho_0,\aleph_0,\delta ,\mathfrak{b})$ as the subset of the couples $(D_1,D_2)\in \mathscr{C}_0(\varrho, \aleph,\varrho_0,\aleph_0,\delta)$ so that
\[
d_g^{\Omega \setminus\overline{D_1\cup D_2}}\le \mathfrak{b}.
\]

Recall that the Hausdorff distance for compact subsets of $\mathbb{R}^n$ is given by
\[
\mathbf{d}_H(\overline{D}_1,\overline{D_2})=\max\left( \max_{x\in \overline{D}_1}d(x,\overline{D}_2),\max_{x\in \overline{D}_2}d(x,\overline{D}_1)\right)
\]
and, following \cite{ABRV}, we define the modified distance $\mathbf{d}_m$ by
\[
\mathbf{d}_m(\overline{D}_1,\overline{D_2})=\max \left( \max_{x\in \partial D_1}d(x, \overline{D}_2), \max_{x\in \partial D_2}d(x,\overline{D}_1)\right).
\]

As it is pointed out in \cite{ABRV}, $\mathbf{d}_m$ is not a distance. To see it, consider $D_1=B(0,1)\setminus B(0,1/2)$ and $D_2=B(0,1)$. In that case simple computations show that
\[
0=\mathbf{d}_m(\overline{D}_1,\overline{D_2})<\mathbf{d}_H(\overline{D}_1,\overline{D_2})=\frac{1}{2}.
\]

In this example $D_1\subset D_2$, but $\overline{D}_1\not\subset D_2$. However, we can enlarge slightly $D_2$ in order to satisfy $\overline{D}_1\subset D_2$. Indeed, if $D_2=B(0, 5/4)$ then $\overline{D}_1\subset D_2$ and
\[
\frac{1}{4}=\mathbf{d}_m(\overline{D}_1,\overline{D_2})<\mathbf{d}_H(\overline{D}_1,\overline{D_2})=\frac{1}{2}.
\]
In dimension two, take $D_1=\{ (r,\theta );\; \eta <\theta <2\pi -\eta ,1/2<r<1\}$, $0<\eta <\pi$. Smoothing the angles of $D_1$ we get a $C^\infty$ simply connected domain so that, if again $D_2=B(0, 5/4)$,
\[
\frac{1}{4}\le \mathbf{d}_m(\overline{D}_1,\overline{D_2})<\mathbf{d}_H(\overline{D}_1,\overline{D_2})=\frac{1}{2}.
\]

In all these examples a small translation in the direction of one of the coordinates axes for instance enables us to construct examples with $D_1\setminus \overline{D}_2\ne \emptyset$, $D_1\setminus \overline{D}_2\ne \emptyset$ and $\mathbf{d}_m(\overline{D}_1,\overline{D_2})<\mathbf{d}_H(\overline{D}_1,\overline{D_2})$.

What these examples show is that it is difficult to give sufficient geometric conditions ensuring that the following equality holds
\begin{equation}\label{ED}
\mathbf{d}_m(\overline{D}_1,\overline{D_2})=\mathbf{d}_H(\overline{D}_1,\overline{D_2}).
\end{equation}
It is obvious to check that \eqref{ED} is satisfied whenever $D_1$ and $D_2$ are balls or ellipses, but not only.

The subset  of couples $(D_1,D_2)\in\mathscr{C}_2(\varrho, \aleph,\varrho_0,\aleph_0,\delta, \mathfrak{b})$ satisfying \eqref{ED}  will denoted $\mathscr{C}_1(\varrho, \aleph,\varrho_0,\aleph_0,\delta ,\mathfrak{b})$.

We fix in all of this text $f\in C^{1,\alpha}(\partial \Omega)$ non negative and  non identically equal to zero.

We aim in this paper to establish the following result.

\begin{theorem}\label{theorem1.1}
Let $\mathfrak{d}=(\varrho, \aleph,\varrho_0,\aleph_0,\delta ,\mathfrak{b})$. There exist two constant $C=C(\mathfrak{d})$ and $0<\Lambda^\ast =\Lambda ^\ast(\mathfrak{d}) <e^{-e}$ so that, for any $(D_1,D_2)\in \mathscr{C}_2(\mathfrak{d})$ with $\partial D_1\cap \partial D_2\ne \emptyset$ and any sequence $(k_j)\in (0,\infty )$ satisfying $k_j\rightarrow \infty$, we have
\[
\mathbf{d}_H(\overline{D}_1,\overline{D_2})\le C\left(\ln \ln |\ln \Lambda |\right)^{-1},
\]
provided that
\[
0<\Lambda := \sup_j\| \mathfrak{t}u_{D_2}(k_j)-\mathfrak{t}u_{D_1}(k_j)\|_{L^2 (\partial \Omega) }<\Lambda^\ast .
\]
\end{theorem}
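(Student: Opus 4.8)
The plan is to reduce the multifrequency data to a single boundary measurement for a \emph{perfectly conducting} inclusion, and then to run an Alessandrini--Beretta--Rosset--Vessella type argument (cf.\ \cite{ABRV}) combining quantitative unique continuation with a non-degeneracy lower bound. First I would analyse the limit $k_j\to\infty$. Testing \eqref{1.2} with $v=u_{D_j}(k_j)$ and using that $f$ is fixed and $\mathbf a_{D_j}(k_j)\ge k_0$ shows that the energy $\int_\Omega\mathbf a_{D_j}(k_j)|\nabla u_{D_j}(k_j)|^2$ stays bounded uniformly in $j$, whence $\int_{D_j}|\nabla u_{D_j}(k_j)|^2\le C/k_j\to 0$. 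Thus $u_{D_j}(k_j)$ converges in $V$ to the solution $u^\infty_{D_j}$ of the perfectly conducting problem: $u^\infty_{D_j}$ is harmonic in $\Omega\setminus\overline{D_j}$, constant on each component of $D_j$, satisfies $k_0\partial_\nu u=f$ on $\partial\Omega$ together with the vanishing flux condition on $\partial D_j$.

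By continuity of $\mathfrak t$ this gives $\mathfrak t u_{D_j}(k_j)\to\mathfrak t u^\infty_{D_j}$ in $L^2(\partial\Omega)$. Writing $\mathfrak t u^\infty_{D_2}-\mathfrak t u^\infty_{D_1}$ as the sum of the three increments through $\mathfrak t u_{D_2}(k_j)-\mathfrak t u_{D_1}(k_j)$ and passing to the limit, the outer increments vanish while the middle one is bounded by $\Lambda$, so $\|\mathfrak t u^\infty_{D_2}-\mathfrak t u^\infty_{D_1}\|_{L^2(\partial\Omega)}\le\Lambda$. It therefore suffices to bound $\mathbf d_H(\overline D_1,\overline D_2)$ by the boundary discrepancy of the two limit potentials.

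Set $G=\Omega\setminus\overline{D_1\cup D_2}$ and $w=u^\infty_{D_2}-u^\infty_{D_1}$. Since $G$ lies outside both inclusions, $w$ is harmonic in $G$; on $\partial\Omega$ its Neumann trace vanishes (both potentials carry the same flux $f$) and its Dirichlet trace is bounded by $\Lambda$, and elliptic regularity (using $f\in C^{1,\alpha}$) upgrades these Cauchy data bounds to the norms needed below. I would then invoke a quantitative estimate for the Cauchy problem --- propagation of smallness through three-spheres/Carleman inequalities --- to carry the boundary smallness of $w$ from $\partial\Omega$ toward the inner boundaries, with a logarithmic loss at each link of the chain. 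Assumption \eqref{As} is used precisely here: it guarantees that the balls $B(x_0,d(x_0,\overline D))$ realising the chain stay inside $\Omega\setminus\overline D$, so the three-spheres iteration never leaves the harmonicity region of $w$.

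For the geometric conclusion I would use \eqref{ED}, encoded in $(D_1,D_2)\in\mathscr C_2(\mathfrak d)$, to replace $\mathbf d_H$ by $\mathbf d_m$, so the maximal distance is attained at a boundary point, say $x_0\in\partial D_1$ with $d(x_0,\overline D_2)=\mathbf d_H$ and $B(x_0,\mathbf d_H)\cap D_2=\emptyset$. On this ball $u^\infty_{D_2}$ is harmonic whereas $u^\infty_{D_1}\equiv c_1$ on the patch of $\partial D_1$ through $x_0$. The crucial ingredient is a non-degeneracy lower bound for the single-inclusion potential: since $f\ge 0$ and $f\not\equiv 0$, the Hopf lemma together with a doubling/three-spheres argument prevents $u^\infty_{D_2}$ from being too flat and produces a lower bound for the oscillation of $w$ over balls of radius comparable to $\mathbf d_H$ near $x_0$. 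Matching this lower bound against the upper bound propagated from $\partial\Omega$ forces $\mathbf d_H$ to be small, the compounded logarithmic losses yielding $(\ln\ln|\ln\Lambda|)^{-1}$; the hypothesis $\partial D_1\cap\partial D_2\ne\emptyset$ keeps $G$ connected and anchors the chain of balls. The main obstacle is exactly this step: carrying out the quantitative unique continuation up to the inner boundary in the merely Lipschitz domain $\Omega\setminus\overline{D_1\cup D_2}$ while keeping all constants uniform over $\mathscr C_2(\mathfrak d)$, and matching it against a non-degeneracy bound that does not deteriorate as $\partial D_1$ and $\partial D_2$ come into contact.
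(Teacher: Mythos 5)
Your proposal follows essentially the same route as the paper: the $k_j\to\infty$ limit reducing the data to the perfectly conducting (equivalently, after normalization, zero-Dirichlet) potentials is Theorem \ref{theoremA}, the logarithmic propagation of smallness from $\partial\Omega$ to $\partial(D_1\cup D_2)$ is Theorem \ref{theoremB}, and the Hopf/Harnack non-degeneracy lower bound at the point realizing $\mathbf{d}_m=\mathbf{d}_H$ is Theorem \ref{theoremLB} together with Corollary \ref{corollaryLB}. The only divergence is bookkeeping: the paper invokes assumption \eqref{As} in the lower-bound step (to keep the ball $B(x_0,d(x_0,\overline{D}))$ inside the harmonicity region of $\tilde{u}_D^0$) rather than in the propagation-of-smallness chain, and uses $\partial D_1\cap\partial D_2\ne\emptyset$ mainly to control the difference of the normalizing constants in Lemma \ref{lemmaB}.
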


As an immediate consequence of this theorem we have the following corollary.

\begin{corollary}\label{corollary1.1}
Let $\mathfrak{d}=(\varrho, \aleph,\varrho_0,\aleph_0,\delta,\mathfrak{b})$. There exist two constant $C=C(\mathfrak{d})$ and $0<\Lambda^\ast =\Lambda ^\ast(\mathfrak{d}) <e^{-e}$ so that, for any $(D_1,D_2)\in \mathscr{C}_2(\mathfrak{d})$ with $\partial D_1\cap \partial D_2\ne \emptyset$ and any sequence of frequencies $(\omega_j)\in (0,\infty )$ satisfying $\omega_j\rightarrow \infty$, we have
\[
\mathbf{d}_H(\overline{D}_1,\overline{D_2})\le C\left(\ln \ln |\ln \Lambda |\right)^{-1},
\]
provided that
\[
0<\Lambda := \sup_j\| \mathfrak{t}u_{D_2}(k(\omega_j))-\mathfrak{t}u_{D_1}(k(\omega_j))\|_{L^2 (\partial \Omega) }<\Lambda^\ast .
\]
\end{corollary}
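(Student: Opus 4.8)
The plan is to obtain Corollary \ref{corollary1.1} directly from Theorem \ref{theorem1.1} by reducing the statement phrased in the frequency variable to the statement phrased in the conductivity variable. Given a sequence of frequencies $(\omega_j)\in(0,\infty)$ with $\omega_j\to\infty$, I would introduce the associated conductivities $k_j:=k(\omega_j)$ and then simply check that $(k_j)$ fulfils the two hypotheses that Theorem \ref{theorem1.1} imposes on its sequence.

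The first hypothesis, $k_j\in(0,\infty)$, is immediate: by construction the map $k$ sends $(0,\infty)$ into $(0,\infty)\setminus\{k_0\}$, so in particular each $k_j$ lies in $(0,\infty)$. The second hypothesis, $k_j\to\infty$, follows from the standing assumption $\lim_{\omega\to\infty}k(\omega)=\infty$ together with $\omega_j\to\infty$; composing the two gives $k(\omega_j)\to\infty$, that is $k_j\to\infty$. Note also that the quantity $\Lambda$ controlling the measurements in the corollary, namely $\sup_j\|\mathfrak{t}u_{D_2}(k(\omega_j))-\mathfrak{t}u_{D_1}(k(\omega_j))\|_{L^2(\partial\Omega)}$, is literally the quantity $\Lambda$ of the theorem once one substitutes $k_j=k(\omega_j)$.

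With these identifications in place, I would invoke Theorem \ref{theorem1.1} with the sequence $(k_j)$ and with the very same constants $C=C(\mathfrak{d})$ and $\Lambda^\ast=\Lambda^\ast(\mathfrak{d})$, which yields at once the bound $\mathbf{d}_H(\overline{D}_1,\overline{D_2})\le C(\ln\ln|\ln\Lambda|)^{-1}$ whenever $0<\Lambda<\Lambda^\ast$. There is no genuine analytic obstacle here, since the corollary is only a reparametrization of the theorem; the single point that must be verified is the limiting behaviour of the composed map, which is exactly what the assumption $\lim_{\omega\to\infty}k(\omega)=\infty$ provides.
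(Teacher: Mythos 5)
Your proposal is correct and is exactly the paper's route: the authors present the corollary as an immediate consequence of Theorem \ref{theorem1.1}, obtained by setting $k_j=k(\omega_j)$ and using the standing assumption $\lim_{\omega\to\infty}k(\omega)=\infty$ to verify $k_j\to\infty$. Nothing further is needed.
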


The inverse problem we consider in the present paper was already studied, in the case of smooth star-shaped subdomains with respect to some fixed point, by Ammari and Triki \cite{AT}. For a similar problem with a single boundary measurement we refer to \cite{Ch3} where a Lipschitz stability estimate was established for a non monotone one-parameter family of unknown subdomains. The literature on the determination of an unknown part of the boundary is rich, but we just quote here \cite{ABRV,BCY,EHY} (see also the references therein). We note that
another multifrequency medium problem considers single observation for varying multiple wavenumbers \cite{BL2005,BT2010}. To have an overview, we recommend a recent review paper \cite{BLLT2015} which nicely summarizes the theoretical and numerical results in multifrequency inverse medium and source problems for acoustic Helmholtz equations and Maxwell equations.

The key step in our proof consists in reducing the original inverse problem to the one of determining an unknown part of the inner boundary from a single boundary measurement. For this last problem we provided in Section 2 a logarithmic stability estimate. This intermediate result is then used in Section 3 to prove Theorem \ref{theorem1.1}. Section 4 contains uniqueness results obtained from Theorem \ref{theorem1.1}.

The idea of reducing the original problem to the one of recovering the shape of an unknown inner part of the boundary was borrowed from the paper by Ammari and Triki \cite{AT}.

Our analysis combines both ideas from \cite{ABRV,AT} together with some recent results related to quantifying the uniqueness of continuation in various situations \cite{Ch1,Ch2,CT}.

This paper is completed by a last section in which we give an expansion of the solution of the BVP \eqref{1.1} in  the eigenfunction basis of the Neumann-Poincar\'e operator (shortened to NP operator in the rest of this paper) related to the Neumann-Green function.

\section{An intermediate estimate}

Pick $D\in \mathscr{D}_0(\varrho,\aleph)$ and let $\tilde{u}_D^0\in H^1(\Omega )$ satisfying $\tilde{u}_D^0=0$ in $\overline{D}$ and it is the variational solution of the BVP
\[
\left\{
\begin{array}{ll}
\Delta \tilde{u}=0\quad &\mbox{in}\; \Omega \setminus\overline{D} ,
\\
\tilde{u}=0 &\mbox{on}\; \partial D,
\\
\partial_\nu \tilde{u}=f &\mbox{on}\; \partial \Omega .
\end{array}
\right.
\]
As  $\tilde{u}_D^0\in C^\infty(\Omega \setminus\overline{D})$ by the usual interior regularity of harmonic functions, we can apply, for an arbitrary $\omega$,  $D\Subset \omega \Subset \Omega$, both the H\"older regularity theorem for Dirichlet  BVP in $\omega \setminus \overline{D}$ (\cite[Theorem 6.14 in page 107]{GT}) and the H\"older regularity theorem for Neumann BVP in $\Omega \setminus \overline{\omega}$ (\cite[Theorem 6.31 in page 128]{GT}). Therefore $\tilde{u}_D^0\in C^{2,\alpha}(\overline{\Omega}\setminus D)$.

Also, note that, according to the maximum principle and Hopf's maximum principle, $\tilde{u}_D^0\ge 0$.

Define then
\[
\tilde{u}_D=\tilde{u}_D^0-\frac{1}{|\partial \Omega|}\int_{\partial \Omega}\tilde{u}_D^0d\sigma(x).
\]

\begin{lemma}\label{lemmaB}
Let $D_1,D_2\in  \mathscr{D}_0(\varrho,\aleph)$ and set
\[
m_j=\frac{1}{|\partial \Omega|}\int_{\partial \Omega}\tilde{u}_D^0d\sigma(x).
\]
If $\partial D_1\cap \partial D_2\ne \emptyset$ then
\begin{equation}\label{B1}
|m_1-m_2| \le \|\tilde{u}_{D_1}-\tilde{u}_{D_2}\|_{L^\infty (\partial (D_1\cup D_2))}.
\end{equation}
\end{lemma}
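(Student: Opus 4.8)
The plan is to test the inequality at a single well-chosen point, namely any point $x_0$ in the intersection $\partial D_1\cap \partial D_2$, which is nonempty by hypothesis. The whole estimate should follow from the two homogeneous Dirichlet conditions $\tilde{u}_{D_1}^0=0$ on $\partial D_1$ and $\tilde{u}_{D_2}^0=0$ on $\partial D_2$, together with the observation that such an $x_0$ actually lies on $\partial(D_1\cup D_2)$.

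First I would recall that by construction $\tilde{u}_{D_j}=\tilde{u}_{D_j}^0-m_j$ and that $\tilde{u}_{D_j}^0$ vanishes on $\overline{D_j}$, in particular on $\partial D_j$; moreover the regularity $\tilde{u}_{D_j}^0\in C^{2,\alpha}(\overline{\Omega}\setminus D_j)$ established above guarantees that $\tilde{u}_{D_j}$ is continuous up to $\partial D_j$, so that pointwise evaluation on the boundary is legitimate. Since $x_0\in \partial D_1\cap \partial D_2$ we get $\tilde{u}_{D_1}^0(x_0)=\tilde{u}_{D_2}^0(x_0)=0$, whence $\tilde{u}_{D_1}(x_0)=-m_1$ and $\tilde{u}_{D_2}(x_0)=-m_2$. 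Subtracting yields $(\tilde{u}_{D_1}-\tilde{u}_{D_2})(x_0)=m_2-m_1$, so that $|m_1-m_2|$ equals the modulus of $\tilde{u}_{D_1}-\tilde{u}_{D_2}$ at the single point $x_0$. Note that the sign information $\tilde{u}_{D_j}^0\ge 0$ is not even needed here, only the vanishing on $\partial D_j$, which makes the argument symmetric in the two indices and removes any need for a case distinction on the sign of $m_1-m_2$.

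It then remains only to check that this value is dominated by the $L^\infty$ norm over $\partial(D_1\cup D_2)$, i.e. that $x_0\in \partial(D_1\cup D_2)$. This is the one point requiring a little care, and I expect it to be the only genuine (though minor) obstacle. I would argue purely topologically: since $D_1$ and $D_2$ are open one has
\[
\partial(D_1\cup D_2)=(\partial D_1\setminus D_2)\cup(\partial D_2\setminus D_1),
\]
and the membership $x_0\in \partial D_2$ forces $x_0\notin D_2$; hence $x_0\in \partial D_1\setminus D_2\subseteq \partial(D_1\cup D_2)$. Consequently
\[
|m_1-m_2|=\left|(\tilde{u}_{D_1}-\tilde{u}_{D_2})(x_0)\right|\le \|\tilde{u}_{D_1}-\tilde{u}_{D_2}\|_{L^\infty(\partial(D_1\cup D_2))},
\]
which is exactly \eqref{B1}, completing the proof.
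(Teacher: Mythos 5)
Your proof is correct and is essentially the paper's own argument: the paper likewise notes that $\tilde{u}_{D_1}-\tilde{u}_{D_2}$ is constant equal to $\pm(m_1-m_2)$ on $\partial D_1\cap\partial D_2$ and that this set is contained in $\partial(D_1\cup D_2)$, then concludes. Your write-up merely makes the topological inclusion and the continuity up to the boundary explicit (and gets the sign right, where the paper has a harmless sign slip).
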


\begin{proof}
As $\partial D_1\cap \partial D_2\subset \partial (D_1\cup D_2)$ and
\[
\tilde{u}_{D_1}-\tilde{u}_{D_2}= m_1-m_2 \quad \mbox{on}\; \partial D_1\cap \partial D_2,
\]
the expected inequality follows easily.
\qed
\end{proof}

Under the assumptions and notations of Lemma \ref{lemmaB}, we have from \eqref{B1}
\begin{equation}\label{B2}
\| \tilde{u}_{D_2}^0-\tilde{u}_{D_1}^0\|_{L^\infty (\partial (D_1\cup D_2))}\le 2\| \tilde{u}_{D_2}-\tilde{u}_{D_1}\|_{L^\infty (\partial (D_1\cup D_2))}.
\end{equation}

Let $\mathfrak{d}_1=(\varrho ,\aleph, \varrho_0,\aleph_0 ,\mathfrak{b})$. Then, checking carefully the result in \cite[Section 2.4]{Ch1} we find that there exist three constants $C=C(\mathfrak{d}_1)>0$, $c=c(\mathfrak{d}_1)>0$ and $\beta =\beta (\mathfrak{d}_1)$ so that, for any $0<\epsilon <1$ and $D_1,D_2\in \mathscr{C}_1(\mathfrak{d}_1)$, we have
\begin{align}
C\| \tilde{u}_{D_2}-\tilde{u}_{D_1}&\|_{L^\infty (\partial (D_1\cup D_2))}\label{B3}
\\
&\le \epsilon ^\beta\| \tilde{u}_{D_2}-\tilde{u}_{D_1}\|_{C^{1,\alpha} (\overline{\Omega}\setminus (D_1\cup D_2))}+e^{c/\epsilon}\| \tilde{u}_{D_2}-\tilde{u}_{D_1}\|_{H^1 (\partial \Omega) }.\nonumber
\end{align}
Let us note that \cite[Proposition 2.30]{Ch1} holds for an arbitrary bounded Lipschitz domain (we refer to \cite{BC} or \cite{Ch2} for a detailed proof of this improvement).

\begin{proposition}\label{propositionB}
Set $\mathfrak{d}=(\varrho, \aleph,\delta)$. There exists $C=C(\mathfrak{d})$ so that, for any $D\in \mathscr{D}_1(\mathfrak{d})$, we have
\[
\|\tilde{u}_D\|_{C^{2,\alpha}(\overline{\Omega}\setminus D)}\le C.
\]
\end{proposition}

\begin{proof}
Let $0\le g\in C^{2,\alpha}(\partial \Omega)$ and denote by $w\in C^{2,\alpha}(\overline{\Omega}\setminus D_0)$ the solution of
the BVP
\[
\left\{
\begin{array}{ll}
\Delta w=0\quad &\mbox{in}\; \Omega  ,
\\
w=g &\mbox{on}\; \partial D_0,
\\
\partial_\nu w=f &\mbox{on}\; \partial \Omega .
\end{array}
\right.
\]
The existence of such function is guaranteed by the usual elliptic regularity for both Dirichlet and Neumann BVP's for the Laplace operator. Similar argument will be discussed hereafter.

In light of the fact that $\widetilde{u}_D^0-w$ is harmonic in $\Omega \setminus\overline{D}$, $\widetilde{u}_D^0-w=-w\le 0$ on $\partial D$ and $\partial_\nu (\widetilde{u}_D^0-w)=0$ on $\partial \Omega$ and, we find by applying the twice the maximum principle and Hopf's lemma that $(0\le )\widetilde{u}_D^0\le w$ in $\overline{\Omega}\setminus D$. Whence
\[
\|\tilde{u}_D^0\|_{C(\overline{\Omega}\setminus D)}\le \|w\|_{C(\overline{\Omega}\setminus D)}.
\]
Let $\mu =\inf(\delta ,\mbox{dist}(\Omega ,\Omega_0))/2$ and set
\[
U=\{x\in \Omega \setminus\overline{D};\; \mbox{dist}(x,\Omega_0)\le \mu\}.
\]
By \cite[Lemma 3.11 in page 118]{Ch0}, we have
\begin{equation}\label{B4}
\|\tilde{u}_D^0\|_{C^{2,\alpha}(U)}\le \mu^{-1}C(n)\|\tilde{u}_D^0\|_{C(\overline{\Omega}\setminus D)}\le \mu^{-1}C(n)\|w\|_{C(\overline{\Omega}\setminus D)}.
\end{equation}
Let $u_1\in C^{2,\alpha}(\overline{\Omega_0}\setminus D)$ be the solution of the BVP
\[
\left\{
\begin{array}{ll}
\Delta u_1=0\quad &\mbox{in}\; \Omega_0\setminus\overline{D}  ,
\\
u_1=\tilde{u}_D^0&\mbox{on}\; \partial \Omega_0\cup \partial D ,
\end{array}
\right.
\]
and $u_2\in C^{2,\alpha}(\overline{\Omega}\setminus \Omega_0)$ be the solution of the BVP
\[
\left\{
\begin{array}{ll}
\Delta u_2=0\quad &\mbox{in}\; \Omega\setminus\overline{\Omega_0}  ,
\\
\partial_\nu u_2=\partial_\nu \tilde{u}_D^0 &\mbox{on}\; \partial \Omega\cup \partial \Omega_0 .
\end{array}
\right.
\]
A careful examination of the classical Schauder estimates in \cite[Chapter 6]{GT}, for both Dirichlet and Neumann problems, we see that  the different constants only depend on $C^{2,\alpha}$ parameters of the domain. Therefore
\begin{align}
&\|u_1\|_{C^{2,\alpha}(\overline{\Omega_0}\setminus D)}\le C(\mathfrak{d})\|u_1\|_{C^{2,\alpha}(\partial \Omega_0\cup \partial D)}, \label{B5}
\\
&\|u_2\|_{C^{2,\alpha}(\overline{\Omega}\setminus \Omega_0)}\le C(\mathfrak{d})\|\partial_\nu u_2\|_{C^{1,\alpha}(\partial \Omega\cup \partial \Omega_0)}. \label{B6}
\end{align}
We put \eqref{B4} in \eqref{B5} and \eqref{B6} in order to get
\[
\|\tilde{u}_D^0\|_{C^{2,\alpha}(\overline{\Omega}\setminus D)}\le C.
\]
The expected inequality follows then by noting that
\[
\|\tilde{u}_D\|_{C^{2,\alpha}(\overline{\Omega}\setminus D)}\le 2\|\tilde{u}_D^0\|_{C^{2,\alpha}(\overline{\Omega}\setminus D)}.
\]
The proof is then complete.
\qed
\end{proof}

Let $w\in H^2(\Omega \setminus\overline{\Omega_0})$. We have from the usual interpolation inequalities and trace theorems, with $\mathfrak{d}=(\varrho,\aleph )$,
\[
\|w\|_{H^1(\partial \Omega )}\le C(\mathfrak{d})\|w\|_{L^2(\partial \Omega )}^{1/3}\|w\|_{H^{3/2}(\partial \Omega )}^{2/3}\le C(\mathfrak{d})C_{\Omega \setminus\overline{\Omega_0}}\|w\|_{L^2(\partial \Omega )}^{1/3}\|w\|_{H^{2}(\Omega \setminus\overline{\Omega_0})}^{2/3} ,
\]
the constant $C_{\Omega \setminus\overline{\Omega_0}}$ only depends on $\Omega \setminus\overline{\Omega_0}$.

In light of this inequality, Proposition \ref{propositionB} and inequalities \eqref{B2} and \eqref{B3}, we can state the following result

\begin{theorem}\label{theoremB}
Set $\mathfrak{d}_1=(\varrho,\aleph,\varrho_0,\aleph_0,\delta ,\mathfrak{b})$. There exist three constants $C=C(\mathfrak{d}_1)>0$, $c=c(\mathfrak{d}_1)>0$ and $\beta =\beta (\mathfrak{d}_1)$ so that, for any $0<\epsilon <1$ and $(D_1,D_2)\in \mathscr{C}_1(\mathfrak{d}_1)$ with $\partial D_1\cap \partial D_2\ne \emptyset$, we have
\[
C\| \tilde{u}_{D_2}^0-\tilde{u}_{D_1}^0\|_{L^\infty (\partial (D_1\cup D_2))} \le \epsilon ^\beta+e^{c/\epsilon}\| \tilde{u}_{D_2}-\tilde{u}_{D_1}\|_{L^2 (\partial \Omega) }^{1/3}.
\]
\end{theorem}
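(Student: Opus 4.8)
The plan is to derive the estimate by simply chaining together the four ingredients assembled above: the trace/interpolation inequality recorded immediately before the theorem, Proposition~\ref{propositionB}, and the two inequalities \eqref{B2} and \eqref{B3}. No new analytic input is required; the whole point is to observe that the \emph{a priori} bound of Proposition~\ref{propositionB} renders the coefficients multiplying $\epsilon^\beta$ and $e^{c/\epsilon}$ harmless, i.e.\ independent of the particular pair $(D_1,D_2)$. Concretely, I would start from \eqref{B2}, which already bounds $\|\tilde{u}_{D_2}^0-\tilde{u}_{D_1}^0\|_{L^\infty(\partial(D_1\cup D_2))}$ by twice $\|\tilde{u}_{D_2}-\tilde{u}_{D_1}\|_{L^\infty(\partial(D_1\cup D_2))}$, and then apply \eqref{B3} to this last quantity, which produces the two terms $\epsilon^\beta\|\tilde{u}_{D_2}-\tilde{u}_{D_1}\|_{C^{1,\alpha}(\overline{\Omega}\setminus(D_1\cup D_2))}$ and $e^{c/\epsilon}\|\tilde{u}_{D_2}-\tilde{u}_{D_1}\|_{H^1(\partial\Omega)}$, to be treated separately.

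For the first term I would use that $(D_1,D_2)\in\mathscr{C}_1(\mathfrak{d}_1)$ forces each $D_j\in\mathscr{D}_1(\varrho,\aleph,\delta)$, and that $\overline{\Omega}\setminus(D_1\cup D_2)\subset\overline{\Omega}\setminus D_j$ since $D_j\subset D_1\cup D_2$. Hence Proposition~\ref{propositionB} yields $\|\tilde{u}_{D_j}\|_{C^{1,\alpha}(\overline{\Omega}\setminus(D_1\cup D_2))}\le\|\tilde{u}_{D_j}\|_{C^{2,\alpha}(\overline{\Omega}\setminus D_j)}\le C(\mathfrak{d})$, so that $\|\tilde{u}_{D_2}-\tilde{u}_{D_1}\|_{C^{1,\alpha}(\overline{\Omega}\setminus(D_1\cup D_2))}$ is controlled by a constant depending only on $\mathfrak{d}_1$. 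The first term thus collapses to a constant multiple of $\epsilon^\beta$.

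For the second term I would insert the interpolation inequality, getting $\|\tilde{u}_{D_2}-\tilde{u}_{D_1}\|_{H^1(\partial\Omega)}\le C\,N^{1/3}\|\tilde{u}_{D_2}-\tilde{u}_{D_1}\|_{H^2(\Omega\setminus\overline{\Omega_0})}^{2/3}$, where $N=\|\tilde{u}_{D_2}-\tilde{u}_{D_1}\|_{L^2(\partial\Omega)}$. Since $D_j\Subset\Omega_0$ we have $\Omega\setminus\overline{\Omega_0}\subset\overline{\Omega}\setminus D_j$, so Proposition~\ref{propositionB} again bounds each $\tilde{u}_{D_j}$ in $C^{2,\alpha}(\overline{\Omega}\setminus\Omega_0)$, hence in $H^2(\Omega\setminus\overline{\Omega_0})$ by a constant depending only on $\mathfrak{d}$ (the collar $\Omega\setminus\overline{\Omega_0}$ being fixed and of finite volume). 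Thus the $H^2$-factor is a constant and only the factor $N^{1/3}$ survives. Collecting the two contributions gives $A\|\tilde{u}_{D_2}^0-\tilde{u}_{D_1}^0\|_{L^\infty(\partial(D_1\cup D_2))}\le B_1\epsilon^\beta+B_2\,e^{c/\epsilon}N^{1/3}$ with $A,B_1,B_2$ depending only on $\mathfrak{d}_1$; bounding the right-hand side by $\max(B_1,B_2)\bigl(\epsilon^\beta+e^{c/\epsilon}N^{1/3}\bigr)$ and dividing through yields the asserted inequality after renaming $C,c,\beta$.

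I expect no genuine obstacle here: the statement is essentially a bookkeeping corollary of Proposition~\ref{propositionB}, \eqref{B2} and \eqref{B3}. The one point deserving care is the restriction argument, namely that the uniform $C^{2,\alpha}$ bound of Proposition~\ref{propositionB} on $\overline{\Omega}\setminus D_j$ is inherited both on the overlap complement $\overline{\Omega}\setminus(D_1\cup D_2)$ and on the fixed collar $\Omega\setminus\overline{\Omega_0}$; this is exactly what guarantees that the constants in front of $\epsilon^\beta$ and $e^{c/\epsilon}$ are independent of the individual subdomains $D_1,D_2$, and hence depend only on $\mathfrak{d}_1$.
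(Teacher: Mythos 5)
Your proposal is correct and follows exactly the route the paper intends: the paper itself offers no separate proof of Theorem~\ref{theoremB}, merely asserting that it follows ``in light of'' the interpolation inequality, Proposition~\ref{propositionB}, \eqref{B2} and \eqref{B3}, and your write-up fills in precisely that chain, including the key observation that the uniform $C^{2,\alpha}$ bound restricts to $\overline{\Omega}\setminus(D_1\cup D_2)$ and to the collar $\Omega\setminus\overline{\Omega_0}$ so that all constants depend only on $\mathfrak{d}_1$. Nothing is missing.
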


\begin{theorem}\label{theoremA}
Let $D\in \mathscr{D}_0(\varrho,\aleph)$. For any sequence $(k_j)$ in $(0,\infty)$ such that $\lim_{j\rightarrow\infty}k_j=\infty$, we have
\[
\lim_{j\rightarrow \infty}\|\mathfrak{t}u_D(k_j)-\mathfrak{t}\tilde{u}_D\|_{L^2(\partial \Omega )}=0.
\]
In particular, $(\mathfrak{t}u_D(k_j))\in \ell ^\infty (L^2(\partial \Omega ))$.
\end{theorem}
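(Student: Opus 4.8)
The plan is to argue by weak compactness together with an energy estimate: along any sequence $k_j\to\infty$ the solutions $u_D(k_j)$ should converge weakly in $V$ to $\tilde{u}_D$, and the compactness of the trace operator then upgrades this to strong convergence of the traces in $L^2(\partial\Omega)$. First I would fix $k>k_0$ and test the variational identity \eqref{1.2} with $v=u_D(k)$. Splitting $\mathbf{a}_D(k)=k_0+(k-k_0)\chi_D$ gives the energy identity
\[
k_0\|\nabla u_D(k)\|_{L^2(\Omega)}^2+(k-k_0)\|\nabla u_D(k)\|_{L^2(D)}^2=\int_{\partial\Omega}f\,\mathfrak{t}u_D(k)\,d\sigma(x).
\]
Bounding the right-hand side by $\|f\|_{L^2(\partial\Omega)}\|\mathfrak{t}u_D(k)\|_{L^2(\partial\Omega)}$ and invoking the trace inequality together with the equivalence of the $V$-norm and $\|\nabla\cdot\|_{L^2(\Omega)}$, one obtains, for $k>k_0$ (so that both terms on the left are nonnegative), a uniform bound $\|\nabla u_D(k)\|_{L^2(\Omega)}\le C$ along with the decay $\|\nabla u_D(k)\|_{L^2(D)}^2\le C/(k-k_0)$. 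In particular $\nabla u_D(k_j)\to 0$ in $L^2(D)$.

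Since $(u_D(k_j))$ is bounded in the Hilbert space $V$, I would extract a subsequence converging weakly in $V$ to some $u_\ast$. As $\Omega$ is Lipschitz, the trace map $\mathfrak{t}\colon H^1(\Omega)\to L^2(\partial\Omega)$ is compact, so weak convergence in $V$ forces $\mathfrak{t}u_D(k_j)\to\mathfrak{t}u_\ast$ strongly in $L^2(\partial\Omega)$ along the subsequence. It then remains to identify $u_\ast$. The strong decay of $\nabla u_D(k_j)$ on $D$ combined with weak $H^1$ convergence yields $\nabla u_\ast=0$ on $D$, hence $u_\ast$ is constant on $D$ (which is connected). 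Testing \eqref{1.2} with any $v\in V$ vanishing on $\overline{D}$ annihilates the integral over $D$ for every $k$, leaving $k_0\int_{\Omega\setminus\overline{D}}\nabla u_D(k)\cdot\nabla v=\int_{\partial\Omega}fv\,d\sigma$; passing to the limit shows $u_\ast$ is harmonic in $\Omega\setminus\overline{D}$, carries the Neumann condition relating its normal derivative to $f$ on $\partial\Omega$, is constant on $\partial D$, and has zero mean on $\partial\Omega$ (this last inherited in the limit from continuity of $\mathfrak{t}$ on $V$). These are precisely the conditions characterizing $\tilde{u}_D$, so $u_\ast=\tilde{u}_D$.

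Finally, since the limit $\tilde{u}_D$ is independent of the chosen subsequence, the standard subsequence argument upgrades the convergence to the whole sequence: $\mathfrak{t}u_D(k_j)\to\mathfrak{t}\tilde{u}_D$ in $L^2(\partial\Omega)$, which is the asserted limit, and the $\ell^\infty$ statement follows at once since a convergent sequence is bounded. I expect the main obstacle to be the passage to the limit in the interior term $(k-k_0)\int_D\nabla u_D(k)\cdot\nabla v$, which for general $v$ is an indeterminate product of a diverging factor and a vanishing gradient; the remedy is to test only with functions vanishing on $\overline{D}$, which removes this term identically, while the constraint $\nabla u_\ast=0$ on $D$ is recovered separately from the energy bound. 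A secondary point requiring care is matching the constant value of $u_\ast$ on $D$ and the normalization on $\partial\Omega$ with those of $\tilde{u}_D$, which is exactly where the uniqueness of the limiting perfect-conductor boundary value problem is used.
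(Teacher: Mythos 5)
Your overall route coincides with the paper's: test the variational identity with the solution itself to get a uniform $H^1$ bound together with $\|\nabla u_D(k)\|_{L^2(D)}^2=O(1/k)$, extract a weak limit, identify it by testing with functions that annihilate the singular term over $D$, use compactness of the trace to upgrade to strong convergence in $L^2(\partial\Omega)$, and finish with the subsequence trick. The only structural difference is cosmetic: the paper subtracts $\tilde u_D$ at the outset and shows $v_D(k)=u_D(k)-\tilde u_D\rightharpoonup 0$, whereas you keep $u_D(k)$ and identify the limit at the end.

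There is, however, a genuine gap in the identification step. The conditions you list for $u_\ast$ --- harmonic in $\Omega\setminus\overline D$, the Neumann condition on $\partial\Omega$ (which, tested only against zero-mean traces, is anyway determined only up to an additive constant), constant on $\partial D$ with an \emph{unspecified} constant, and zero mean on $\partial\Omega$ --- do not determine a function uniquely, so they cannot ``precisely characterize $\tilde u_D$''. The associated homogeneous problem has a one-dimensional solution space: for $\Omega=B(0,2)$, $D=B(0,1)$ in the plane, the function $z=a(\ln|x|-\ln 2)$ on the annulus, extended by the constant $-a\ln 2$ on $D$, is harmonic outside $D$, constant on $\partial D$, vanishes on $\partial\Omega$ (hence has zero mean there), and satisfies $\int_{\Omega\setminus\overline D}\nabla z\cdot\nabla v=0$ for every admissible $v$ vanishing on $\overline D$ with zero boundary mean, since $\partial_\nu z$ is constant on $\partial\Omega$. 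The source of the trouble is that the test space ``$v=0$ on $\overline D$'' carries no information about the exterior flux $\int_{\partial D}\partial_\nu u_{\ast}$, which is precisely what pins down the free constant. The repair stays inside your scheme: the dangerous term $(k-k_0)\int_D\nabla u_D(k)\cdot\nabla v$ vanishes identically not only for $v$ vanishing on $\overline D$ but for every $v$ that is \emph{constant} on $D$; testing with such a $v$ (say $v\equiv 1$ on $D$, extended with zero boundary mean) and passing to the limit yields the floating-conductor condition $\int_{\partial D}\partial_\nu u_{\ast|+}=0$, after which the energy identity on $\Omega\setminus\overline D$ gives uniqueness and lets you match the constant with $-\frac{1}{|\partial\Omega|}\int_{\partial\Omega}\tilde u_D^0$. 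To be fair, the paper's own conclusion of this step (``$\overline{v}$ harmonic, $\partial_\nu\overline{v}=0$ on $\partial\Omega$, $\overline{v}\in V$, hence $\overline{v}=0$'') suffers from the same omission; and matching the floating limit with the grounded function $\tilde u_D$ also quietly uses the compatibility of $f$, a point on which the paper itself is not careful. But since you state the characterization explicitly, the missing flux condition is a step that, as written, fails.
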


\begin{proof}
Let
\[
W=\left\{w\in H^1(\Omega \setminus \overline{D});\; \int_{\partial \Omega}w(x)d\sigma(x)=0\right\}.
\]
$W$ is a closed subspace of $H^1(\Omega \setminus \overline{D})$ and the norm $\|\nabla w\|_{L^2(\Omega \setminus  \overline{D})}$ is equivalent on $W$ to the norm $\|w\|_{H^1(\Omega \setminus \overline{D})}$. Moreover, for any $w\in V$, $w_{|\Omega \setminus \overline{D}}\in W$.

Clearly, $v_D(k)=u_D(k)-\tilde{u}_D$ is the variational solution of the BVP
\[
\left\{
\begin{array}{ll}
\mbox{div}\left(\left( k_0+(k-k_0)\chi_D\right)\nabla v\right)=-\mbox{div}\left(\left( k_0+(k-k_0)\chi_D\right)\nabla \tilde{u}_D\right)\quad &\mbox{in}\; \Omega ,
\\
\partial_\nu v=0 &\mbox{on}\; \partial \Omega .
\end{array}
\right.
\]
As
\[
\int_{\partial \Omega}u_D(k)d\sigma (x)=\int_{\partial \Omega}\tilde{u}_Dd\sigma (x)=0,
\]
we have $v_D(k)\in V$.

By Green's formula, for any $w\in V$, we get
\begin{align}
-k_0\int_{\Omega \setminus\overline{D}}\nabla v_D(k)\cdot \nabla wdx&-k\int_D\nabla v_D(k)\cdot \nabla wdx\label{0.1}
\\
&= k_0\int_{\Omega \setminus\overline{D}}\nabla \tilde{u}_D\cdot \nabla wdx-\int_{\partial \Omega }fwd\sigma (x).\nonumber
\end{align}
Take in this identity $w=v_D(k)$ and make use Cauchy-Schwarz's identity in order to obtain
\begin{align}
k\|\nabla &v_D(k)\|_{L^2(D)}^2+k_0\|\nabla v_D(k)\|_{L^2(\Omega \setminus\overline{D})}^2\label{0.2}
\\
&\le k_0\|\nabla v_D(k)\|_{L^2(\Omega \setminus\overline{D})}\|\nabla \tilde{u}\|_{L^2(\Omega \setminus\overline{D})}+\|f\|_{L^2(\partial \Omega )}\|\mathfrak{t}v_D(k)\|_{L^2(\partial \Omega )}.\nonumber
\end{align}
But the trace operator $w\in H^1(\Omega \setminus\overline{D})\mapsto w_{|\partial \Omega}\in L^2(\partial \Omega)$ is bounded. Hence there exists a constant $C_0>0$, depending on $\Omega$ and $D$ so that
\[
\|w\|_{L^2(\partial \Omega )}\le C_0\|\nabla w\|_{L^2(\Omega \setminus\overline{D})},\quad w\in W.
\]
Therefore
\begin{equation}\label{0.3}
\|\nabla v_D(k)\|_{L^2(\Omega \setminus\overline{D})}\le \|\nabla \tilde{u}\|_{L^2(\Omega \setminus\overline{D})}+C_0k_0^{-1}\|f\|_{L^2(\partial \Omega )}:=M.
\end{equation}
This in \eqref{0.2} entails
\begin{equation}\label{0.4}
\|\nabla v_D(k)\|_{L^2(D)}\le \frac{M}{\sqrt{k}}.
\end{equation}

Pick $(k_j)$ a sequence in $(0,\infty)\setminus\{k_0\}$ so that $k_j\rightarrow \infty$ as $j\rightarrow \infty$. Under the temporary notation $v_j=v_D(k_j)$, we have from \eqref{0.3} and \eqref{0.4} that $v_j$ is bounded in $H^1(\Omega )$ and $\nabla v_j \rightarrow 0$ in $L^2(D)$ when $j\rightarrow \infty$. Subtracting if necessary a subsequence, we may assume that $v_j\rightarrow \overline{v}\in H^1(\Omega )$, strongly in $H^{3/4}(\Omega )$ and weakly in $H^1(\Omega )$. In consequence, $\nabla \overline{v}=0$ in $D$.

Define
\[
W_0=\{ w\in W;\; w=0\; \mbox{on}\; \partial D\}.
\]
An extension by $0$ of a function in $W_0$ enables us to consider $W_0$ as a closed subspace of $V$.

It is not hard to see that \eqref{0.1} yields
\[
-k_0\int_{\Omega \setminus\overline{D}}\nabla v_j\cdot \nabla wdx= k_0\int_{\Omega \setminus\overline{D}}\nabla \tilde{u}_D\cdot \nabla wdx-\int_{\partial \Omega }fwd\sigma (x),\quad w\in W_0.
\]
Passing to the limit when $j\rightarrow \infty$, we obtain
\[
-k_0\int_{\Omega \setminus\overline{D}}\nabla \overline{v}\cdot \nabla wdx= k_0\int_{\Omega \setminus\overline{D}}\nabla \tilde{u}_D\cdot \nabla wdx-\int_{\partial \Omega }fwd\sigma (x),\quad w\in W_0.
\]
Taking in this identity an arbitrary $w\in C_0^\infty (\Omega \setminus\overline{D})$, we find that $\overline{v}$ is harmonic in $\Omega \setminus\overline{D}$. Applying then generalized Green's function to deduce that $\partial_\nu \overline{v}=0$. On the other hand we know that $v_j$ converges  strongly to $\overline{v}$ in $H^{1/4}(\partial \Omega )$ (thank to the continuity of the trace operator). Therefore $\overline{v}\in V$ and hence $\overline{v}$ is identically equal to zero, implying in particular that $v_j$ converges  strongly to $0$ in $L^2(\partial \Omega )$.
\qed
\end{proof}

We get by combining Theorems \ref{theoremB} and \ref{theoremA} the following result.
\begin{theorem}\label{theoremC}
If $\mathfrak{d}_1=(\varrho,\aleph,\varrho_0,\aleph_0, \delta,\mathfrak{b})$, then there exist three constants $C=C(\mathfrak{d}_1)>0$, $c=c(\mathfrak{d}_1)>0$ and $\beta =\beta (\mathfrak{d}_1)$ so that, for any $0<\epsilon <1$ and $(D_1,D_2)\in \mathscr{C}_1(\mathfrak{d}_1)$ with $\partial D_1\cap \partial D_2\ne \emptyset$,  and for any sequence $(k_j)$ in $(0,\infty)$ such that $\lim_{j\rightarrow\infty}k_j=\infty$, we have
\[
C\| \tilde{u}_{D_2}^0-\tilde{u}_{D_1}^0\|_{L^\infty (\partial (D_1\cup D_2))} \le \epsilon ^\beta+e^{c/\epsilon}\left[\sup_j\| u_{D_2}(k_j)-u_{D_1}(k_j)\|_{L^2 (\partial \Omega) }\right]^{1/3}.
\]
\end{theorem}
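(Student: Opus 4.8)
The plan is to deduce Theorem \ref{theoremC} from Theorem \ref{theoremB} by a single substitution: on the right-hand side of the estimate of Theorem \ref{theoremB} I would replace the quantity $\|\tilde{u}_{D_2}-\tilde{u}_{D_1}\|_{L^2(\partial\Omega)}$, which involves the limiting profiles, by the a priori larger measurement quantity $\sup_j\|u_{D_2}(k_j)-u_{D_1}(k_j)\|_{L^2(\partial\Omega)}$. The bridge between these two quantities is exactly the convergence furnished by Theorem \ref{theoremA}.

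First I would observe that the hypothesis $(D_1,D_2)\in\mathscr{C}_1(\mathfrak{d}_1)$ forces $D_1,D_2\in\mathscr{D}_1(\mathfrak{d})\subset\mathscr{D}_0(\varrho,\aleph)$, so that Theorem \ref{theoremA} applies to each inclusion separately. Hence, for the prescribed sequence $k_j\to\infty$, one has $\mathfrak{t}u_{D_1}(k_j)\to\mathfrak{t}\tilde{u}_{D_1}$ and $\mathfrak{t}u_{D_2}(k_j)\to\mathfrak{t}\tilde{u}_{D_2}$, both in $L^2(\partial\Omega)$. By linearity of the trace and subtraction, $\mathfrak{t}u_{D_2}(k_j)-\mathfrak{t}u_{D_1}(k_j)\to\mathfrak{t}\tilde{u}_{D_2}-\mathfrak{t}\tilde{u}_{D_1}$ in $L^2(\partial\Omega)$, whence continuity of the norm gives
\[
\|\tilde{u}_{D_2}-\tilde{u}_{D_1}\|_{L^2(\partial\Omega)}=\lim_{j\to\infty}\|u_{D_2}(k_j)-u_{D_1}(k_j)\|_{L^2(\partial\Omega)}\le\sup_j\|u_{D_2}(k_j)-u_{D_1}(k_j)\|_{L^2(\partial\Omega)}.
\]

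Since $t\mapsto t^{1/3}$ is increasing and the prefactor $e^{c/\epsilon}$ is positive, inserting this bound into the estimate of Theorem \ref{theoremB} preserves the inequality and yields precisely the claimed estimate, valid for every $0<\epsilon<1$ with the same constants $C$, $c$, $\beta$. I do not anticipate any genuine obstacle: the argument is essentially a substitution, and the only point deserving (minor) care is the identification of the $L^2(\partial\Omega)$-limit of the measurement differences with the limiting trace difference, which is immediate from Theorem \ref{theoremA} and continuity of the norm, once the membership of $D_1,D_2$ in $\mathscr{D}_0(\varrho,\aleph)$ has been verified through the containment of the geometric classes noted above.
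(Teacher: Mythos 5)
Your proposal is correct and is exactly the paper's route: the paper derives Theorem \ref{theoremC} by ``combining Theorems \ref{theoremB} and \ref{theoremA}'', i.e.\ by using the $L^2(\partial\Omega)$-convergence $\mathfrak{t}u_{D_i}(k_j)\to\mathfrak{t}\tilde{u}_{D_i}$ to bound $\|\tilde{u}_{D_2}-\tilde{u}_{D_1}\|_{L^2(\partial\Omega)}$ by $\sup_j\|u_{D_2}(k_j)-u_{D_1}(k_j)\|_{L^2(\partial\Omega)}$ and substituting into the estimate of Theorem \ref{theoremB}. You have merely written out the (correct) details that the paper leaves implicit.
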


\section{Proof of the main result}

If $\mathcal{U}$ is a bounded domain of $\mathbb{R} ^n$, we set
\[
\mathcal{U} ^\delta =\{x\in \mathcal{U} ;\; \mbox{dist}(x,\partial \mathcal{U} )> \delta\},\quad \delta >0.
\]
Define then
\[
\varkappa (\mathcal{U})=\sup\{\delta >0;\; \mathcal{U} ^\delta\ne \emptyset \}.
\]

We endow $C^{1,\alpha}(\overline{\mathcal{U}})$ with norm
\[
|u|_{1,\alpha} := \|\nabla u\|_{L^2(\mathcal{U} )^n}+[\nabla u]_\alpha ,
\]
where
\[
[\nabla u]_\alpha =\sup\left\{ \frac{|\nabla u(x)-\nabla u(y)|}{|x-y|^\alpha};\; x,y\in \overline{\mathcal{U}} ,\; x\neq y\right\}.
\]

For $0<\alpha <1$, $\eta >0$ and $M>0$, define $\mathscr{S}(\mathcal{U})=\mathscr{S}(\mathcal{U}, \alpha ,\eta ,M )$ by
\[
\mathscr{S}(\mathcal{U})=\{ u\in C^{1,\alpha}(\overline{\mathcal{U}});\; |u|_{1,\alpha} \le M,\; \|\nabla u\|_{L^\infty (\Gamma )}\ge \eta
\; \mbox{and}\; \Delta u=0\}.
\]
Denote by $\mathscr{U}=\mathscr{U}(\varrho, \aleph ,\mathfrak{h},\mathfrak{b})$ the set of bounded domains $\mathcal{U}$ of $\mathbb{R}^n$ that are of class $C^{0,1}$, with parameters $\varrho>0$, $\aleph >0$, and satisfy $\varkappa(\mathcal{U})\ge \mathfrak{h}>0$ and $d_g^\mathcal{U}\le \mathfrak{b}$.

\begin{theorem}\label{theoremLB}
Let $\tilde{\mathfrak{d}}=(\varrho, \aleph ,\mathfrak{h},\mathfrak{b}, \alpha, \eta ,M)$.There exist two constants $c=c(\tilde{\mathfrak{d}})>0$  so that, for any $\mathcal{U}\in \mathscr{U}$, $0<\delta <\mathfrak{h}$,  $x_0\in \mathcal{U} ^\delta$ and $u\in \mathscr{S}(\mathcal{U})$, we have
\[
e^{-e^{c/\delta}}\le \| u\|_{L^2(B(x_0,\delta ))}.
\]
\end{theorem}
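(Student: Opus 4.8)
The plan is to reduce the statement to a propagation–of–smallness estimate for the gradient, and then to run a three-spheres chain for the harmonic components of $\nabla u$. First, it suffices to prove a lower bound of the same form for $\|\nabla u\|_{L^2(B(x_0,\delta/2))}$. Indeed, $u$ is harmonic and $B(x_0,\delta)\subset\mathcal{U}$ (because $x_0\in\mathcal{U}^\delta$), so the Caccioppoli inequality gives, with $\bar{u}$ the mean of $u$ over $B(x_0,\delta)$,
\[
\|\nabla u\|_{L^2(B(x_0,\delta/2))}\le \frac{C}{\delta}\,\|u-\bar{u}\|_{L^2(B(x_0,\delta))}\le \frac{C}{\delta}\,\|u\|_{L^2(B(x_0,\delta))}.
\]
Hence a bound $\|\nabla u\|_{L^2(B(x_0,\delta/2))}\ge e^{-e^{c/\delta}}$ yields $\|u\|_{L^2(B(x_0,\delta))}\ge (\delta/C)e^{-e^{c/\delta}}\ge e^{-e^{c'/\delta}}$ after enlarging the constant. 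This step also circumvents the fact that $|u|_{1,\alpha}$ controls only derivatives of $u$: all subsequent estimates are carried out on the harmonic vector field $v=\nabla u$, each of whose components $\partial_i u$ is harmonic.

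Two a priori bounds on $v$ frame the argument. On the one hand, $\|v\|_{L^2(\mathcal{U})}\le M$ together with $[v]_\alpha\le M$ and the bounded geometry ($d_g^{\mathcal{U}}\le\mathfrak{b}$, hence bounded diameter and measure) yields a uniform bound $\|v\|_{L^\infty(\mathcal{U})}\le M'=M'(\tilde{\mathfrak{d}})$, since an $L^2$ bound plus a Hölder seminorm bound controls the sup norm on a domain of bounded measure. On the other hand, the hypothesis $\|\nabla u\|_{L^\infty(\Gamma)}\ge\eta$ provides (by continuity of $v$) a point $z\in\Gamma$ with $|v(z)|\ge\eta/2$; the bound $[v]_\alpha\le M$ then forces $|v|\ge\eta/4$ on $B(z,r_0)\cap\overline{\mathcal{U}}$ with $r_0=(\eta/(4M))^{1/\alpha}$, and the interior cone condition attached to the parameters $(\varrho,\aleph)$ together with $\varkappa(\mathcal{U})\ge\mathfrak{h}$ lets me select an interior ball $B(y_1,\rho_0)\subset\mathcal{U}\cap B(z,r_0)$, with $\rho_0=\rho_0(\tilde{\mathfrak{d}})>0$, on which $\|v\|_{L^2(B(y_1,\rho_0))}\ge c_0=c_0(\tilde{\mathfrak{d}})>0$. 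This is the source lower bound.

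Next I would propagate this lower bound to the target ball $B(x_0,\delta/2)$. The engine is the three-spheres inequality for harmonic functions: for concentric balls contained in $\mathcal{U}$ there are constants $C>0$ and $\theta\in(0,1)$ with $\|\partial_i u\|_{L^2(B(x,2r))}\le C\|\partial_i u\|_{L^2(B(x,r))}^\theta\|\partial_i u\|_{L^2(B(x,4r))}^{1-\theta}$. Choosing $r$ comparable to $\delta$ and a chain of overlapping balls $B(x_j,r)$ with endpoints $x_0$ (target) and $x_N=y_1$ (source), with $|x_{j+1}-x_j|\le r$ and all balls $B(x_j,4r)$ inside $\mathcal{U}$, the inequality iterates to $b_N\le C^{1/(1-\theta)}b_0^{\theta^N}$ for the normalized quantities $b_j=\|v\|_{L^2(B(x_j,r))}/(M'|\mathcal{U}|^{1/2})$. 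Since $b_N\ge c_0/(M'|\mathcal{U}|^{1/2})=:\beta_0$, solving for $b_0$ gives $b_0\ge(\beta_0\,C^{-1/(1-\theta)})^{\theta^{-N}}$, and with $N\le C\mathfrak{b}/\delta$ one has $\theta^{-N}\le e^{C_1/\delta}$, whence $\ln b_0\ge -\gamma\,e^{C_1/\delta}$ for a fixed $\gamma>0$. This is precisely a bound $\|v\|_{L^2(B(x_0,r))}\ge e^{-e^{c/\delta}}$, and combined with the Caccioppoli step above it closes the proof.

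The main obstacle is the uniform geometric bookkeeping in the chaining step: I must guarantee, uniformly over $\mathcal{U}\in\mathscr{U}$ and over $0<\delta<\mathfrak{h}$, that $x_0$ and $y_1$ can be joined inside the $\delta$-interior core of $\mathcal{U}$ by a path whose length is controlled by $\mathfrak{b}$, so that $N\le C\mathfrak{b}/\delta$ and every $B(x_j,4r)$ stays in $\mathcal{U}$. This is where the thickness condition $\varkappa(\mathcal{U})\ge\mathfrak{h}$ and the bound $d_g^{\mathcal{U}}\le\mathfrak{b}$ are essential (and where the quantitative continuation results of \cite{Ch1,Ch2,CT} enter); the delicate point is that the geometric distance inside the slightly shrunken domain $\mathcal{U}^{4r}$ must still be controlled by a multiple of $\mathfrak{b}$, uniformly in the class. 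Once this is in place, the exponents $\theta^{N}$ with $N$ of order $1/\delta$ produce the announced double-exponential, and all constants depend only on $\tilde{\mathfrak{d}}$.
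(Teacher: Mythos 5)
Your proposal is correct and follows essentially the same route as the paper: the paper's proof simply invokes the argument of \cite[Theorem 2.1]{CT} with the three-ball inequality applied to $\nabla u$ instead of $u$ (which is exactly your source-plus-chaining step, with the boundary condition $\|\nabla u\|_{L^\infty(\Gamma)}\ge\eta$ as the source and $d_g^{\mathcal{U}}\le\mathfrak{b}$, $\varkappa(\mathcal{U})\ge\mathfrak{h}$ controlling the chain uniformly), and then concludes by Caccioppoli's inequality exactly as in your first paragraph. Your version spells out the geometric bookkeeping that the paper delegates to \cite{CT}, but the decomposition and the key inequalities are the same.
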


\begin{proof}
We mimic the proof of \cite[Theorem 2.1]{CT} in which we substitute the three-ball inequality of $u$ by a three-ball inequality for $\nabla u$. If one examines carefully the proof of \cite[Theorem 2.1]{CT}, he can see that the different constants do not depend on $\mathcal{U}\in \mathscr{U}$ but only on $(\varrho, \aleph ,\mathfrak{h},\mathfrak{b})$. We obtain
\[
e^{-e^{c/\delta}}\le \|\nabla  u\|_{L^2(B(x_0,\delta ))}.
\]
This and Caccioppoli's inequality yield the expected inequality.
\qed
\end{proof}

Bearing in mind that \eqref{As} holds, the following corollary is a consequence of Theorem \ref{theoremLB} and Proposition \ref{propositionB}.

\begin{corollary}\label{corollaryLB}
Set $\mathfrak{d}=(\varrho, \aleph,\delta)$. Then there exist two constants $c=c(\mathfrak{d})>0$ so that, for any $D\in \mathscr{D}_1(\mathfrak{d})$, $x_0\in \Omega_0\setminus\overline{D}$,   we have
\[
e^{-e^{c/d_0}}\le \| \tilde{u}_D^0\|_{L^2(B(x_0,d_0/4 ))},
\]
where $d_0=d(x_0,\overline{D})$.
\end{corollary}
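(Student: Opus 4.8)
The plan is to read the corollary as the special case of Theorem~\ref{theoremLB} obtained by taking $\mathcal U=\Omega\setminus\overline D$, $u=\tilde u_D^0$, the non-degeneracy set $\Gamma=\partial\Omega$, and ball radius $\delta=d_0/4$; essentially everything reduces to checking that the hypotheses of Theorem~\ref{theoremLB} hold with constants uniform over $\mathscr D_1(\mathfrak d)$, and then matching the exponent $c/\delta$ with the target $c/d_0$. First I would verify that $\Omega\setminus\overline D\in\mathscr U(\varrho',\aleph',\mathfrak h,\mathfrak b)$ with parameters independent of $D\in\mathscr D_1(\mathfrak d)$. Since $D\in\mathscr D_0(\varrho,\aleph)$, the set $\Omega\setminus\overline D$ is of class $C^{2,\alpha}$, hence Lipschitz, with parameters controlled by $(\varrho,\aleph)$, and its geometric diameter is bounded because $\Omega$ is bounded. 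A uniform lower bound $\varkappa(\Omega\setminus\overline D)\ge\mathfrak h>0$ is obtained by fixing once and for all a point $p\in\Omega\setminus\overline{\Omega_0}$: as $\overline D\subset\overline{\Omega_0}$ and $\partial(\Omega\setminus\overline D)=\partial\Omega\cup\partial D$, one has $d(p,\partial(\Omega\setminus\overline D))\ge\min\big(d(p,\partial\Omega),d(p,\overline{\Omega_0})\big)=:\mathfrak h$, a quantity depending only on the fixed data.

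Next I would check that $\tilde u_D^0\in\mathscr S(\Omega\setminus\overline D,\alpha,\eta,M)$ with $\eta,M$ uniform. Harmonicity in $\Omega\setminus\overline D$ holds by construction. The bound $|\tilde u_D^0|_{1,\alpha}\le M$ is exactly Proposition~\ref{propositionB}, using that $\nabla\tilde u_D^0=\nabla\tilde u_D$, which controls both $\|\nabla\tilde u_D^0\|_{L^2(\Omega\setminus\overline D)}$ and $[\nabla\tilde u_D^0]_\alpha$ by the fixed constant $C(\mathfrak d)$. The crucial non-degeneracy $\|\nabla\tilde u_D^0\|_{L^\infty(\partial\Omega)}\ge\eta$ comes from the Neumann condition $\partial_\nu\tilde u_D^0=f$ on $\partial\Omega$: since $f\ge0$ is continuous and not identically zero, $\|\nabla\tilde u_D^0\|_{L^\infty(\partial\Omega)}\ge\|\partial_\nu\tilde u_D^0\|_{L^\infty(\partial\Omega)}=\|f\|_{L^\infty(\partial\Omega)}=:\eta>0$, which is independent of $D$. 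This is why $\Gamma$ must be taken to be the outer boundary: the fixed datum $f$ is the seed of the gradient lower bound that the three-ball argument inside Theorem~\ref{theoremLB} then propagates inward.

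With the hypotheses in place I would invoke assumption~\eqref{As}: the inclusion $B(x_0,d_0)\subset\Omega\setminus\overline D$ gives $d(x_0,\partial(\Omega\setminus\overline D))\ge d_0$, so $x_0\in(\Omega\setminus\overline D)^{d_0/4}$. Applying Theorem~\ref{theoremLB} with $\delta=d_0/4$ then yields $e^{-e^{4c/d_0}}\le\|\tilde u_D^0\|_{L^2(B(x_0,d_0/4))}$, and relabelling the constant ($c\mapsto 4c$) gives the claim. The constraint $0<\delta<\mathfrak h$ forces a short case distinction, since $d_0$ is only bounded above by $\mathrm{diam}(\Omega_0)$ and may exceed $4\mathfrak h$: when $d_0/4\ge\mathfrak h$ I would instead apply the theorem with the fixed radius $\mathfrak h/2$ and the same center (admissible because $d(x_0,\partial(\Omega\setminus\overline D))\ge d_0\ge4\mathfrak h>\mathfrak h/2$), obtaining a fixed positive lower bound on $\|\tilde u_D^0\|_{L^2(B(x_0,\mathfrak h/2))}\le\|\tilde u_D^0\|_{L^2(B(x_0,d_0/4))}$ (here $\mathfrak h/2\le d_0/4$); enlarging the final constant, using $d_0\le\mathrm{diam}(\Omega_0)$, absorbs this regime.

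I expect the main obstacle to be the uniformity of the hypotheses rather than the unique-continuation mechanism itself, which is packaged in Theorem~\ref{theoremLB}. Concretely, the two delicate points are (i) extracting the uniform lower bound $\varkappa(\Omega\setminus\overline D)\ge\mathfrak h$ and the uniform Lipschitz/geometric-diameter parameters over the whole class $\mathscr D_1(\mathfrak d)$, and (ii) confirming that the non-degeneracy set in the definition of $\mathscr S$ may legitimately be taken to be $\partial\Omega$, so that $\eta$ is dictated by the fixed boundary datum $f$ and not by $D$. Once these are secured, the passage from $c/\delta$ to $c/d_0$ is purely a matter of setting $\delta=d_0/4$ and renaming constants.
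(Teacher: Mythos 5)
Your argument is exactly the route the paper intends: it states the corollary as a direct consequence of Theorem \ref{theoremLB} and Proposition \ref{propositionB}, ``bearing in mind that \eqref{As} holds,'' and you use precisely those three ingredients — \eqref{As} to place $x_0$ in $(\Omega\setminus\overline{D})^{d_0/4}$, Proposition \ref{propositionB} for the uniform $C^{1,\alpha}$ bound, and the fixed Neumann datum $f$ on $\partial\Omega$ for the non-degeneracy — before renaming the constant. Your extra care about the uniformity of $\varkappa$, $\mathfrak{b}$ and the case $d_0/4\ge\mathfrak{h}$ correctly fills in details the paper leaves implicit.
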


\begin{proof}[of Theorem \ref{theorem1.1}]
Set $\tilde{u}_j=\tilde{u}_{D_j}^0$, $j=1,2$ . According to the maximum principle, we have
\begin{equation}\label{E6}
\max_{\overline{D}_2\setminus D_1}|\tilde{u}_1| =\max_{\overline{D}_2\setminus D_1}|\tilde{u}_1 -\tilde{u}_2|=\max_{\partial (D_2\setminus \overline{D}_1)}|\tilde{u}_1 -\tilde{u}_2|\le \max_{\partial (D_1\cup D_2)}|\tilde{u}_1-\tilde{u}_2|.
\end{equation}

Pick $x_0\in \partial D_2$ so that
\[
\max_{x\in \partial D_2}d(x, \overline{D}_1)=d(x_0, \overline{D}_1):=\overline{d}_1.
\]

Noting that $\tilde{u}_1\ge 0$, we  apply Harnack's inequality (see \cite[Proof of Theorem 2.5 in page 16]{GT}) in order to get
\[
\max_{B(x_0, \overline{d}/4)}\tilde{u}_1\le 3^n\min_{B(x_0, \overline{d}/4)}\tilde{u}_1\le 3^n\max_{\overline{D}_2\setminus D_1}|\tilde{u}_1|.
\]

Whence
\[
\int_{B(x_0,\overline{d}/4)}\tilde{u}_1^2(x)dx\le \left|\mathbb{S}^{n-1}\right|\left(3\overline{d}_1/4\right) ^n\left(\max_{\overline{D}_2\setminus D_1}|\tilde{u}_1|\right)^2.
\]

This and the estimate in Corollary \ref{corollaryLB} yield, by changing if necessary the constant $c$,
\begin{equation}\label{E7}
e^{-e^{c/\overline{d}_1}}\le  \max_{\overline{D}_2\setminus D_1}|\tilde{u}_1|.
\end{equation}

We have similarly
\begin{equation}\label{E8}
e^{-e^{c/\overline{d}_2}}\le  \max_{\overline{D}_1\setminus D_2}|\tilde{u}_2|\le \max_{\partial (D_1\cup D_2)}|\tilde{u}_1-\tilde{u}_2|,
\end{equation}
where
\[
\overline{d}_2:=\max_{x\in \partial D_1}d(x, \overline{D}_2).
\]

In light of \eqref{E6}, \eqref{E7} and \eqref{E8} entail
\begin{equation}\label{E9}
e^{-e^{c/\overline{d}}}\le  \max_{\partial (D_1\cup D_2)}|\tilde{u}_1-\tilde{u}_2|.
\end{equation}
Here $\overline{d}=\max (\overline{d}_1,\overline{d}_2)=\mathbf{d}_H(\overline{D}_1,\overline{D_2})$.

Set
\[
\Lambda := \sup_j\| u_{D_2}(k_j)-u_{D_1}(k_j)\|_{L^2 (\partial \Omega) }.
\]
Then estimates in Theorem \ref{theoremC} in \eqref{E9} give
\begin{equation}\label{E10}
e^{-e^{c/\overline{d}}}\le \epsilon ^\beta+e^{c/\epsilon}\Lambda^{1/3},\quad 0<\epsilon <1.
\end{equation}

Since the function $\epsilon \in (0,1)\rightarrow \epsilon ^\beta e^{-c\beta}$ is increasing, if $\Lambda <e^{-3c}:=\Lambda_0$ then we can take $\epsilon$ in \eqref{E10} so that $\epsilon ^\beta e^{-c\beta}=\Lambda^{1/3}$. A straightforward computation shows that $\epsilon \le 3(c+\beta)|\ln \Lambda|^{-1}$. Modifying if necessary $c$ in \eqref{E10}, we obtain
\begin{equation}\label{E11}
e^{-e^{c/\overline{d}}}\le |\ln \Lambda|^{-1}.
\end{equation}
Therefore, if $\Lambda < \min(\Lambda_0,e^{-e})$ then \eqref{E11} implies
\[
\overline{d} \le c\left(\ln \ln |\ln \Lambda |\right)^{-1}.
\]
The proof is then complete.
\qed
\end{proof}

\section{Uniqueness}

We first observe that the following uniqueness result is an immediate consequence of Theorem \ref{theorem1.1}.

\begin{corollary}\label{corollary4.1}
Assume that $\Omega$ is of class $C^{2,\alpha}$. Let $D_0\Subset D_j\Subset \Omega$ of class $C^{2,\alpha}$, $j=1,2$, so that $\partial D_1\cap\partial D_2\ne \emptyset$ and $\mathbf{d}_m(D_1,D_2)=\mathbf{d}_H(D_1,D_2)$. If $\mathfrak{t}u_{D_1}(k_\ell)=\mathfrak{t}u_{D_2}(k_\ell)$ for some sequence $(k_\ell)$ in $(0,\infty)\setminus\{k_0\}$ with $k_\ell \rightarrow \infty$ when $\ell \rightarrow \infty$, then $D_1=D_2$.
\end{corollary}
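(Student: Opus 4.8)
The plan is to reduce the statement to the stability machinery of Section~3, taking care of the fact that Theorem~\ref{theorem1.1} is stated only for $\Lambda>0$. First I would observe that the hypothesis $\mathfrak{t}u_{D_1}(k_\ell)=\mathfrak{t}u_{D_2}(k_\ell)$ for every $\ell$ forces
\[
\Lambda:=\sup_\ell\|\mathfrak{t}u_{D_2}(k_\ell)-\mathfrak{t}u_{D_1}(k_\ell)\|_{L^2(\partial\Omega)}=0.
\]
Next I would place the fixed pair $(D_1,D_2)$ in a class $\mathscr{C}_2(\mathfrak{d})$ for a suitable $\mathfrak{d}=(\varrho,\aleph,\varrho_0,\aleph_0,\delta,\mathfrak{b})$: since $D_1,D_2$ are fixed domains of class $C^{2,\alpha}$ with $D_0\Subset D_j\Subset\Omega$, one may interpose $\Omega_0$ with $D_j\Subset\Omega_0\Subset\Omega$ and fix $\varrho,\aleph,\varrho_0,\aleph_0,\delta,\mathfrak{b}$ so that all defining requirements hold, including the geometric assumption \eqref{As} and the boundedness of the geometric distance. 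The standing hypothesis $\mathbf{d}_m(D_1,D_2)=\mathbf{d}_H(D_1,D_2)$ is exactly condition \eqref{ED}, so $(D_1,D_2)$ indeed belongs to the subclass on which the estimates of Section~3 apply.

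The heart of the argument is to revisit the chain of inequalities inside the proof of Theorem~\ref{theorem1.1} rather than its final statement. Writing $\overline{d}=\max(\overline{d}_1,\overline{d}_2)$, which equals $\mathbf{d}_m(\overline{D}_1,\overline{D}_2)$ and hence $\mathbf{d}_H(\overline{D}_1,\overline{D}_2)$ by \eqref{ED}, inequality \eqref{E10} (obtained by inserting Theorem~\ref{theoremC} into \eqref{E9}) reads
\[
e^{-e^{c/\overline{d}}}\le\epsilon^\beta+e^{c/\epsilon}\Lambda^{1/3},\qquad 0<\epsilon<1.
\]
Setting $\Lambda=0$ leaves $e^{-e^{c/\overline{d}}}\le\epsilon^\beta$ for every $\epsilon\in(0,1)$; letting $\epsilon\to0^+$ forces $e^{-e^{c/\overline{d}}}=0$, which is compatible only with $\overline{d}=0$. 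Equivalently one may pass to \eqref{E11}, namely $e^{-e^{c/\overline{d}}}\le|\ln\Lambda|^{-1}$, and use $|\ln\Lambda|^{-1}\to0$ as $\Lambda\to0^+$ to reach the same conclusion.

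Finally $\overline{d}=\mathbf{d}_H(\overline{D}_1,\overline{D}_2)=0$ gives $\overline{D}_1=\overline{D}_2$, and since the $D_j$ are domains of class $C^{2,\alpha}$ this yields $D_1=D_2$. The point demanding the most care is precisely the passage at $\Lambda=0$: Theorem~\ref{theorem1.1} is stated under the strict hypothesis $0<\Lambda<\Lambda^\ast$, so it cannot be invoked verbatim, and I would instead take the limit inside the intermediate estimate \eqref{E10} (or \eqref{E11}). The second delicate point is that the estimate controls $\overline{d}=\max(\overline{d}_1,\overline{d}_2)$, which is $\mathbf{d}_m$ rather than a priori $\mathbf{d}_H$; it is only through the hypothesis \eqref{ED} that $\overline{d}=0$ upgrades to $\mathbf{d}_H=0$, and the counterexamples preceding \eqref{ED} show this upgrade genuinely fails without it.
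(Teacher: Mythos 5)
Your argument is correct and is exactly the route the paper intends: the paper declares Corollary \ref{corollary4.1} an ``immediate consequence'' of Theorem \ref{theorem1.1}, and the honest way to make that precise is, as you do, to re-enter the proof at \eqref{E10} (or \eqref{E11}) with $\Lambda=0$ and let $\epsilon\to 0^+$ to force $\overline{d}=0$, since the theorem's stated hypothesis $0<\Lambda<\Lambda^\ast$ excludes the case at hand. Your two flagged delicacies (the $\Lambda=0$ endpoint and the role of \eqref{ED} in upgrading $\mathbf{d}_m=0$ to $\mathbf{d}_H=0$) are precisely the points the paper leaves implicit, and your handling of them is sound.
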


It is worth mentioning that this uniqueness result together with the analyticity of the mapping $k\mapsto \mathfrak{t}u_{D}(k)$ enable us to establish  an uniqueness result when $k$ varies in a subset of $(0,\infty)\setminus\{k_0\}$ possessing an accumulation point. Prior to that, we prove the following Lemma.

\begin{lemma}\label{lemma4.1}
Assume that $\Omega$ and $D$ are two Lipschitz domains of $\mathbb{R}^n$ so that $D\Subset \Omega$. Then the mapping
\[
k\in (0,\infty )\setminus\{k_0\}\mapsto \mathfrak{t}u_D(k)\in L^2(\partial \Omega )
\]
is real analytic.
\end{lemma}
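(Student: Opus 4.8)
The plan is to realize $u_D(k)$ as the image of a fixed vector under the inverse of a bounded operator that depends affinely on $k$, and then to invoke the standard Neumann-series argument for the analyticity of the inverse. First I would rewrite the weak formulation \eqref{1.2}. Since $\mathbf{a}_D(k)=k_0+(k-k_0)\chi_D$, the bilinear form splits as
\[
\mathfrak{a}_k(u,v):=\int_\Omega \mathbf{a}_D(k)\nabla u\cdot\nabla v\,dx = k_0\,\mathfrak{a}(u,v)+(k-k_0)\mathfrak{b}(u,v),\qquad \mathfrak{b}(u,v):=\int_D \nabla u\cdot\nabla v\,dx,
\]
which is affine in $k$, while the right-hand side $L(v):=\int_{\partial\Omega}fv\,d\sigma(x)$ does not depend on $k$ and is a bounded linear functional on $V$ by the continuity of $\mathfrak{t}$. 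Using the Riesz representation with respect to the inner product $\mathfrak{a}$, I would introduce bounded linear operators $A_0,B\in\mathcal{L}(V)$ and a vector $\ell\in V$ by $\mathfrak{a}(A_0u,v)=k_0\mathfrak{a}(u,v)$, $\mathfrak{a}(Bu,v)=\mathfrak{b}(u,v)$ and $\mathfrak{a}(\ell,v)=L(v)$. Writing $A_k:=A_0+(k-k_0)B$, the variational problem \eqref{1.2} becomes $A_ku=\ell$, so that $u_D(k)=A_k^{-1}\ell$; the map $k\mapsto A_k$ is a degree-one operator polynomial, hence entire as an $\mathcal{L}(V)$-valued map.

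Next I would establish analyticity of $k\mapsto A_k^{-1}$. For each real $k_*\in(0,\infty)\setminus\{k_0\}$ the form $\mathfrak{a}_{k_*}$ is coercive on $V$, since $\mathbf{a}_D(k_*)\ge\min(k_0,k_*)>0$ and the $\mathfrak{a}$-norm is equivalent to the $H^1$-norm on $V$; thus $A_{k_*}$ is invertible. Factoring $A_k=A_{k_*}\bigl(I+(k-k_*)A_{k_*}^{-1}B\bigr)$ and expanding the Neumann series, for $|k-k_*|<\|A_{k_*}^{-1}B\|^{-1}$ one obtains the operator-norm convergent expansion
\[
A_k^{-1}=\sum_{m=0}^{\infty}(-(k-k_*))^m\bigl(A_{k_*}^{-1}B\bigr)^m A_{k_*}^{-1}.
\]
Applying this to $\ell$ yields a power series $u_D(k)=\sum_{m\ge0}(-(k-k_*))^m\bigl(A_{k_*}^{-1}B\bigr)^mA_{k_*}^{-1}\ell$ converging in $V$, i.e.\ $k\mapsto u_D(k)$ is $V$-valued real analytic near $k_*$; since $k_*$ was arbitrary it is real analytic on all of $(0,\infty)\setminus\{k_0\}$. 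Finally, composing with the bounded linear operator $\mathfrak{t}:V\to L^2(\partial\Omega)$ preserves the convergent power series, so $k\mapsto\mathfrak{t}u_D(k)$ is real analytic, as claimed. One may equally keep $k$ complex near $k_*$: $A_{k_*}$ stays invertible in a small complex disc by continuity, giving a holomorphic extension whose restriction to the real axis is real analytic.

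The computations are routine; the only point requiring genuine care is the bookkeeping that turns the abstract operator inverse into a bona fide convergent power series with $V$-valued coefficients, which is precisely what the Neumann series supplies, together with the observation that a bounded linear map (here $\mathfrak{t}$) transports such a series intact. The essential structural input that makes everything go through cleanly is that $\mathbf{a}_D(k)$ is affine in $k$, so that $A_k$ is a first-degree operator polynomial and no further regularity in $k$ needs to be manufactured.
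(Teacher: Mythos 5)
Your proof is correct, and it takes a genuinely different (more abstract) route than the paper's. The paper proves differentiability by hand: it estimates the difference quotient, shows $\|u_D(k+\ell)-u_D(k)-\ell w_D(k)\|_V\le C_1(k)\ell^2$, identifies by induction the variational problem satisfied by each derivative, namely $\int_\Omega \mathbf{a}_D(k)\nabla u_D^{(j)}(k)\cdot\nabla v=-j\int_\Omega\chi_D\nabla u_D^{(j-1)}(k)\cdot\nabla v$, derives the factorial bound $\|u_D^{(j)}(k)\|_V\le Cj!\,\phi(k)^{j+1}$ with $\phi(k)=[\min(k,k_0)]^{-1}$, and sums the Taylor series on $|k-\ell|<\min(k,k_0)$. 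Your factorization $A_k=A_{k_*}\bigl(I+(k-k_*)A_{k_*}^{-1}B\bigr)$ produces exactly the same series: unrolling the paper's recursion gives $u_D^{(j)}(k)=(-1)^j j!\,(A_k^{-1}B)^jA_k^{-1}\ell$, so the two arguments are term-by-term identical, just packaged differently. What your version buys is brevity, a free complex-holomorphic extension, and (since $\|B\|\le 1$ and $\|A_{k_*}^{-1}\|\le\phi(k_*)$) the same explicit radius $\min(k_*,k_0)$ with no induction. What the paper's version buys is the explicit variational identification of $u_D^{(j)}(k)$ and the quantitative bounds on its norm, which are not incidental: they are reused verbatim in Proposition \ref{proposition5.1}(ii) to control the coefficients $A_{D,j}^\epsilon(k)$ of the eigenfunction expansion. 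So your proof is a clean substitute for Lemma \ref{lemma4.1} itself, but if one later needs the derivative identities one would have to extract them from the Neumann series anyway.
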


\begin{proof}
Let $k\in (0,\infty)\setminus\{k_0\}$ and $|\ell |\le k/2$ so that $k+\ell (0,\infty )\setminus\{k_0\}$. Since $u_D(k+\ell)$ is the solution of the variational problem
\[
\int_\Omega \mathbf{a}_D(k+\ell)\nabla u_D(k+\ell)\cdot \nabla vdx=\int_{\partial \Omega}fvd\sigma (x),\quad v\in V,
\]
we have
\begin{equation}\label{4.1}
\min (k/2,k_0)\|u_D(k+\ell)\|_V\le \|\mathfrak{t}\|\|f\|_{L^2(\partial \Omega)}.
\end{equation}
Here $\|\mathfrak{t}\|$ denotes the norm of $\mathfrak{t}$ as bounded operator acting from $V$ into $L^2(\partial \Omega)$.

Next, let $w_D(k)\in V$ be the solution of the variational problem
\[
\int_\Omega \mathbf{a}_D(k)\nabla w_D(k)\cdot \nabla v=-\int_\Omega \chi_D\nabla u_D(k)\cdot \nabla v,\quad v\in V.
\]
Then elementary computations show that
\begin{align*}
\int_\Omega \mathbf{a}_D(k)\nabla&\left[u_D(k+\ell)-u_D(k)-\ell w_D(k)\right]\cdot \nabla v
\\
&=\ell\int_\Omega \chi_D\nabla \left[u_D(k)-u_D(k+\ell)\right]\cdot \nabla v,\quad v\in V.
\end{align*}
Hence
\begin{equation}\label{4.2}
\min (k,k_0)\|u_D(k+\ell)-u_D(k)-\ell w_D(k)\|_V\le |\ell |\|u_D(k)-u_D(k+\ell)\|_V.
\end{equation}
But
\[
\int_\Omega \mathbf{a}_D(k)\nabla \left[u_D(k+\ell)-u_D(k)\right]\cdot \nabla v=-\ell\int_\Omega \chi_D\nabla u_D(k+\ell)\cdot \nabla v,\quad v\in V.
\]
This entails
\[
\min (k,k_0)\|u_D(k+\ell)-u_D(k)\|_V\le |\ell |\|u_D(k+\ell)\|_V,
\]
which, combined with \eqref{4.1}, yields
\begin{equation}\label{4.3}
\|u_D(k+\ell)-u_D(k)\|_V\le C_0(k)|\ell |,
\end{equation}
with $C_0(k)=\left[\min (k/2,k_0)\right]^{-1}\|\mathfrak{t}\|\|f\|_{L^2(\partial \Omega)}$.

Putting \eqref{4.3} into \eqref{4.2}, we get
\[
\|u_D(k+\ell)-u_D(k)-\ell w_D(k)\|_V\le C_1(k)\ell ^2,
\]
where $C_1(k)=\left[\min (k,k_0)\right]^{-1}C_0(k)$.

In other words, we proved that the mapping
\[
 k\in (0,\infty )\setminus\{k_0\} \mapsto u_D(k)\in V
\]
is differentiable and its derivative $u'_D(k)$ is the solution of the variational problem
\[
\int_\Omega \mathbf{a}_D(k)\nabla u'_D(k)\cdot \nabla v=-\int_\Omega \chi_D\nabla u_D(k)\cdot \nabla v,\quad v\in V.
\]
Therefore, we have the a priori estimate
\begin{equation}\label{4.4}
\|u'_D(k)\|_V \le C\phi (k)^2,
\end{equation}
where we set $C=\|\mathfrak{t}\|\|f\|_{L^2(\partial \Omega)}$ and $\phi (k)=\left[\min (k,k_0)\right]^{-1}$.

Now an induction argument in $j$ shows that $k\in (0,\infty )\setminus\{k_0\} \mapsto u_D(k)\in V$ is $j$-times differentiable and $u^{(j)}_D(k)$ is the solution of the variational problem
\[
\int_\Omega \mathbf{a}_D(k)\nabla u^{(j)}_D(k)\cdot \nabla v=-j\int_\Omega \chi_D\nabla u^{(j-1)}_D(k)\cdot \nabla v,\quad v\in V.
\]
In light of this identity and \eqref{4.4} we show, again by using an induction argument in $j$, that
\[
\|u^{(j)}_D(k)\|_V \le Cj!\phi (k)^{j+1}.
\]
Consequently, if $|k-\ell |<\phi(k)^{-1}=\min (k,k_0)$, the series
\[
\sum_{j\ge 0}\frac{1}{j!}\|u^{(j)}_D(k)\|_V(k-\ell)^j
\]
converges and hence, thank to the completeness of $V$, the series
\[
\sum_{j\ge 0}\frac{1}{j!}u^{(j)}_D(k) (k-\ell)^j
\]
converges in $V$. That is we proved that $k\in (0,\infty )\setminus\{k_0\}\mapsto u_D(k)\in V$ is real analytic and, since $\mathfrak{t}\in \mathscr{B}(V,L^2(\partial \Omega ))$, we conclude that $k\in (0,\infty )\setminus\{k_0\}\mapsto \mathfrak{t}u_D(k)\in L^2(\partial \Omega )$ is also real analytic.
\qed
\end{proof}

In light of Corollary \ref{corollary4.1} and the fact that a real analytic function $F:(0,\infty )\setminus\{k_0\}\rightarrow V$ cannot vanish in a subset of $(0,\infty )\setminus\{k_0\}$ possessing an accumulation point in $(0,\infty )\setminus\{k_0\}$ without being identically equal to zero, we get the following uniqueness result.

\begin{corollary}\label{corollary4.2}
Assume that $\Omega$ is of class $C^{2,\alpha}$. Let $D_0\Subset D_j\Subset \Omega$ of class $C^{2,\alpha}$, $j=1,2$ so that $\partial D_1\cap\partial D_2\ne \emptyset$ and $\mathbf{d}_m(D_1,D_2)=\mathbf{d}_H(D_1,D_2)$. If $\mathfrak{t}u_{D_1}=\mathfrak{t}u_{D_2}$ in some subset of $(0,\infty )\setminus\{k_0\}$ having an accumulation point in $(0,\infty )\setminus\{k_0\}$, then $D_1=D_2$.
\end{corollary}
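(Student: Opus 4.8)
The plan is to turn the hypothesis into uniqueness by combining the real analyticity established in Lemma \ref{lemma4.1} with the discrete uniqueness already contained in Corollary \ref{corollary4.1}, the link between the two being the identity theorem for analytic functions. I set
\[
F(k)=\mathfrak{t}u_{D_1}(k)-\mathfrak{t}u_{D_2}(k)\in L^2(\partial \Omega ),\qquad k\in (0,\infty )\setminus\{k_0\}.
\]
By Lemma \ref{lemma4.1} each map $k\mapsto \mathfrak{t}u_{D_j}(k)$ is real analytic, hence so is $F$, and by assumption $F$ vanishes on a set $S$ admitting an accumulation point $k_\ast \in (0,\infty )\setminus\{k_0\}$.

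First I would reduce the Hilbert-space-valued identity theorem to the classical scalar one: for every $g\in L^2(\partial \Omega )$ the scalar function
\[
k\mapsto \int_{\partial \Omega}F(k)\,g\,d\sigma (x)
\]
is real analytic and vanishes on $S$, so the classical identity theorem for real analytic functions of one real variable forces it to vanish on the entire connected component of $(0,\infty )\setminus\{k_0\}$ containing $k_\ast$. Since $g$ is arbitrary, $F$ itself vanishes there.

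The main obstacle is that the parameter set $(0,\infty )\setminus\{k_0\}=(0,k_0)\cup (k_0,\infty )$ is \emph{disconnected}, so the previous step only gives $F\equiv 0$ on one subinterval, whereas Corollary \ref{corollary4.1} needs $F$ to vanish along a sequence $k_\ell\to\infty$, that is on $(k_0,\infty )$. To overcome this I would observe that $F$ in fact extends real-analytically across $k_0$. Indeed, at $k=k_0$ the coefficient $\mathbf{a}_D(k_0)\equiv k_0$ is a strictly positive constant, so the variational problem \eqref{1.2} stays uniformly coercive and Lax--Milgram produces $u_D(k_0)\in V$; moreover every estimate in the proof of Lemma \ref{lemma4.1} depends only on $\min (k,k_0)>0$ and hence remains valid at $k=k_0$, yielding a power series for $u_D$ with radius of convergence $k_0$ centered at $k_0$. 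Consequently $k\mapsto \mathfrak{t}u_D(k)$ is real analytic on the connected interval $(0,\infty )$, and so is $F$; the scalar identity theorem applied on $(0,\infty )$ then gives $F\equiv 0$ on all of $(0,\infty )$, irrespective of which subinterval contains $k_\ast$.

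Finally I would specialise to high frequencies: choosing any sequence $k_\ell\to\infty$ in $(k_0,\infty )$ we obtain $\mathfrak{t}u_{D_1}(k_\ell)=\mathfrak{t}u_{D_2}(k_\ell)$, and since $\partial D_1\cap \partial D_2\ne \emptyset$ and $\mathbf{d}_m(D_1,D_2)=\mathbf{d}_H(D_1,D_2)$, Corollary \ref{corollary4.1} yields $D_1=D_2$, which completes the argument.
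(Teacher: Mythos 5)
Your argument is correct and follows the same route the paper takes: real analyticity of $k\mapsto \mathfrak{t}u_D(k)$ (Lemma \ref{lemma4.1}), the identity theorem for analytic Hilbert-space-valued functions, and then Corollary \ref{corollary4.1} applied to a sequence $k_\ell\to\infty$. The paper, however, disposes of the middle step in one sentence, asserting that an analytic function on $(0,\infty)\setminus\{k_0\}$ vanishing on a set with an accumulation point must vanish identically; as stated this is not quite right, since $(0,\infty)\setminus\{k_0\}$ is disconnected and the identity theorem only forces vanishing on the component containing the accumulation point, which may be $(0,k_0)$ and hence useless for producing $k_\ell\to\infty$. Your extra step --- observing that the coercivity constant $\min(k,k_0)$ stays bounded away from zero at $k=k_0$, so that all the estimates of Lemma \ref{lemma4.1} survive and $k\mapsto \mathfrak{t}u_D(k)$ is in fact real analytic on the whole of $(0,\infty)$ --- closes this gap cleanly, and your reduction of the vector-valued identity theorem to the scalar one by pairing against an arbitrary $g\in L^2(\partial\Omega)$ is standard and correct. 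So your proof is not merely a reproduction of the paper's argument but a repaired version of it; nothing further is needed.
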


The uniqueness results in Corollaries \ref{corollary4.1} and \ref{corollary4.2} are different from those existing in the literature in the case of single measurement (compare with \cite[Theorems 4.3.2 and 4.3.5]{Is}).

\section{Expansion in the eigenfunction basis of the NP operator}

We introduce some definitions and results that will be proved in Appendix A.

Let $N$ be the Neumann-Green function on $\Omega$. That is $N$ obeys to the following properties, where $y\in \Omega$ is arbitrary,
\[
\Delta N(\cdot ,y)=\delta_y,\quad \partial _\nu N(\cdot ,y)_{|\partial \Omega}=0.
\]
We normalize $N$ so that
\[
\int_{\partial \Omega} N(x,y)d\sigma (x)=0,\quad y\in \Omega.
\]

Denote by $E$ the usual fundamental solution of the Laplacian in the whole space. That is
\[
E(x)=\left\{
\begin{array}{ll} -\ln |x|/(2\pi) &\mbox{if}\; n=2. \\ |x|^{2-n}/((n-2)\omega_n)\quad &\mbox{if}\; n\ge 3. \end{array}
\right.
\]
Here $\omega_n=|\mathbb{S}^{n-1}|$.

We establish in Appendix A that the Neumann-Green function is symmetric and has the form
\[
N(x,y)=E(x-y)+R(x,y),\quad x,y\in \Omega ,\; x\ne y.
\]
With $R\in C^\infty (\Omega \times \Omega )$ and $R(\cdot ,y)\in H^{3/2}(\Omega )$, $y\in \Omega$. 

Consider the integral operators acting on $L^2(\partial D)$ as follows
\[
\mathscr{S}_Df(x)=\int_{\partial D}N(x,y)f(y)d\sigma (y),\quad f\in L^2(\partial D),\; x\in \Omega .
\]
We will see later that $\mathscr{S}_D$ is extended to a bounded operator from $H^{-1/2}(\partial D)$ into the space
\[
\mathscr{H}_D=\{u\in H^1(\Omega );\; \Delta u=0\; \mbox{in}\; \Omega \setminus \partial D\; \mbox{and}\; \partial_\nu u=0\; \mbox{on}\; \partial \Omega\}
\]
endowed with the norm $\|\nabla u\|_{L^2(\Omega )}$. Note that according to the usual trace theorems $\partial_\nu u$ is an element of $H^{-1/2}(\partial \Omega )$.

We define the NP operator $\mathscr{K}_D$  as the integral operator acting on $L^2(\partial D)$ with weakly singular kernel
\[
L(x,y)=\frac{(x-y)\cdot \nu (y)}{|x-y|^n}+\partial _{\nu (y)}R(x,y),\quad x,y\in \partial D,\; x\ne y.
\]

Finally, we establish in Appendix A that $\mathcal{S}_D=\mathscr{S}_D{_{|\partial D}}$ defines an isomorphism from $H^{-1/2}(\partial D)$ onto $H^{1/2}(\partial D)$.

\begin{theorem}\label{theorem5.1}
Define successively, as long as the maximum is positive, the energy quotients
\[
\lambda_j^+(D)=\max_{g\bot \{g_{D,0}^+,\ldots ,g_{D,j-1}^+\}}\frac{\|\nabla \mathscr{S}_Dg\|_{L^2(\Omega \setminus\overline{D})}^2-\|\nabla \mathscr{S}_Dg\|_{L^2(D)}^2}{\|\nabla \mathscr{S}_Dg\|_{L^2(\Omega )}^2}.
\]
Here the orthogonality is  with respect to the scalar product $(\nabla \mathscr{S}_D\cdot |\nabla \mathscr{S}_D\cdot )_{L^2(\Omega )}$. The maximum is attained at $g_{D,j}^+\in H^{1/2}(\partial D )$.

Define similarly
\[
\lambda_j^-(D)=\min_{g\bot \{g_{D,0}^-,\ldots ,g_{D,j-1}^-\}}\frac{\|\nabla \mathscr{S}_Dg\|_{L^2(\Omega \setminus\overline{D})}^2-\|\nabla \mathscr{S}_Dg\|_{L^2(D)}^2}{\|\nabla \mathscr{S}_Dg\|_{L^2(\Omega )}^2}.
\]
The minimum is attained at $g_{D,j}^-\in H^{1/2}(\partial D)$.

The potentials $\mathscr{S}_Dg_{D,j}^\pm$ together with all $\mathscr{S}_Dh$, $h\in  \ker (\mathscr{K}_D\mathcal{S}_D)\subset H^{-1/2}(\partial D)$, are mutually orthogonal and complete in $\mathscr{H}_D$.
\end{theorem}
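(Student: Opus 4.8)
The plan is to recast the two quadratic forms appearing in the Rayleigh quotient as forms on the boundary space $H^{-1/2}(\partial D)$ and to recognise the resulting generalised eigenvalue problem as the spectral problem for a compact self-adjoint operator built from the NP operator. Everything then follows from the spectral theorem, and the only genuinely analytic input is the compactness of that operator.

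First I would fix the right Hilbert space. Since $\mathcal S_D$ is an isomorphism from $H^{-1/2}(\partial D)$ onto $H^{1/2}(\partial D)$ and is symmetric, the bilinear form $(g,h)_\ast:=\langle \mathcal S_Dg,h\rangle_{\partial D}$, with $\langle\cdot,\cdot\rangle_{\partial D}$ the $H^{1/2}$--$H^{-1/2}$ duality pairing, is a scalar product on $H^{-1/2}(\partial D)$; denote the resulting space by $\mathcal H^\ast$. Green's formula applied separately in $D$ and in $\Omega\setminus\overline D$, together with $\partial_\nu \mathscr S_Dg=0$ on $\partial\Omega$ and the jump relations for the single layer potential (which for the kernel $N=E+R$ come, modulo the smooth remainder $R$, from $E$), gives for $u=\mathscr S_Dg$ the identities $\|\nabla u\|_{L^2(\Omega)}^2=(g,g)_\ast$ and $\|\nabla u\|_{L^2(\Omega\setminus\overline D)}^2-\|\nabla u\|_{L^2(D)}^2=-2\,(Ag,g)_\ast$, where $A:=\mathcal S_D^{-1}\mathscr K_D\mathcal S_D$. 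Consequently $\mathscr S_D:\mathcal H^\ast\to\mathscr H_D$ is an isometric isomorphism (surjectivity being the layer-potential representation of Appendix A), and the energy quotient in the statement equals $-2\,(Ag,g)_\ast/(g,g)_\ast$.

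Next I would establish that $A$ is self-adjoint and compact on $\mathcal H^\ast$. Self-adjointness of $A$ with respect to $(\cdot,\cdot)_\ast$ is equivalent to the $L^2$ self-adjointness of $\mathscr K_D\mathcal S_D$, i.e. to Plemelj's symmetrisation identity, which for the Neumann--Green function rests on the symmetry $N(x,y)=N(y,x)$ proved in Appendix A and is obtained exactly as in the classical free-space case. Compactness is the main obstacle: because $\Omega\setminus\overline D$ is of class $C^{2,\alpha}$, the leading kernel $\frac{(x-y)\cdot\nu(y)}{|x-y|^n}$ of $\mathscr K_D$ satisfies $(x-y)\cdot\nu(y)=O(|x-y|^{1+\alpha})$ on $\partial D$, hence is $O(|x-y|^{1+\alpha-n})$, a weakly singular kernel on the $(n-1)$-dimensional surface $\partial D$, while $\partial_{\nu(y)}R$ is smooth; thus $\mathscr K_D$ is compact on $L^2(\partial D)$, and combining this with the smoothing property of $\mathcal S_D$ upgrades it to compactness of $A$ on $\mathcal H^\ast$. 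I also record that $\ker A=\ker(\mathscr K_D\mathcal S_D)$, since $Ag=0$ is equivalent to $\mathscr K_D\mathcal S_Dg=0$ by injectivity of $\mathcal S_D$.

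It then remains to invoke the spectral theorem for compact self-adjoint operators on the Hilbert space $\mathcal H^\ast$. This furnishes a complete $(\cdot,\cdot)_\ast$-orthonormal system of eigenvectors of $A$, split into eigenvectors for the negative eigenvalues, eigenvectors for the positive eigenvalues, and an orthonormal basis of $\ker A$. The Courant--Fischer min-max principle identifies the recursively defined $\lambda_j^+$ (resp. $\lambda_j^-$) with $-2$ times the negative (resp. positive) eigenvalues of $A$, and the maximisers $g_{D,j}^+$ (resp. minimisers $g_{D,j}^-$) with the corresponding eigenvectors; the construction proceeds ``as long as the maximum is positive'', that is, as long as $A$ still has negative eigenvalues. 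Eigenvectors attached to distinct eigenvalues are $(\cdot,\cdot)_\ast$-orthogonal and, together with $\ker A=\ker(\mathscr K_D\mathcal S_D)$, span $\mathcal H^\ast$. Transporting this decomposition through the isometry $\mathscr S_D$ turns $(\cdot,\cdot)_\ast$-orthogonality into orthogonality in $\mathscr H_D$ and completeness in $\mathcal H^\ast$ into completeness in $\mathscr H_D$, which is exactly the assertion. Once compactness of the Neumann--Green NP operator in the energy space is in hand, the spectral theorem and the min-max identification are routine.
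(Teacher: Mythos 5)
Your proposal is correct and follows essentially the same route as the paper: Appendix A assembles exactly the ingredients you list --- the energy identity $\|\nabla\mathscr{S}_Dg\|_{L^2(\Omega)}^2=(\mathcal{S}_Dg|g)_{L^2(\partial D)}$, the Plemelj symmetrization $\mathscr{K}_D\mathcal{S}_D=\mathcal{S}_D\mathscr{K}_D^\ast$ resting on the symmetry of the Neumann--Green function, the identification $H_D^{+}=H^{1/2}(\partial D)$, and compactness of $\mathscr{K}_D$ from the weak singularity of its kernel --- and then concludes by invoking the abstract theorem of Khavinson--Putinar--Shapiro in place of your explicit spectral-theorem and min--max step. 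You merely unpack that black box; the substance is identical.
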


This eigenvalue variational problem is correlated to the eigenvalue problem of the NP operator $\mathscr{K}_D$.  Precisely, we have
\begin{corollary}\label{corollary5.1}
The spectrum of $\mathscr{K}_D$ consists in the eigenvalues $\mu_j^\pm(D)=-\lambda_j^\pm(D)/2$, $j\ge 1$, multiplicities included, together with possibly the point zero. The extremal functions $g_{D,j}^\pm$ are exactly the eigenvalues of $\mathscr{K}_D$.
\end{corollary}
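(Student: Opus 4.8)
The plan is to recognize the energy quotient defining $\lambda_j^\pm(D)$ as (a multiple of) the Rayleigh quotient of the Neumann--Poincar\'e operator, and then to read off the eigenvalues and eigenfunctions from the Courant--Fischer min-max principle. First I would fix $g\in H^{-1/2}(\partial D)$, write $u=\mathscr{S}_Dg$, and record that $u$ is harmonic in $\Omega\setminus\partial D$, continuous across $\partial D$, and satisfies $\partial_\nu u=0$ on $\partial\Omega$, while its normal derivative jumps according to $\partial_\nu u|_\pm=(\mp\tfrac12 I+\mathscr{K}_D^\ast)g$ on $\partial D$; here $\mathscr{K}_D^\ast$ is the $L^2(\partial D)$-adjoint of $\mathscr{K}_D$, whose kernel $\partial_{\nu(x)}N(x,y)$ is obtained from $L$ through the symmetry $N(x,y)=N(y,x)$. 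Applying Green's formula separately on $D$ and on $\Omega\setminus\overline{D}$, and using that the contribution on $\partial\Omega$ vanishes, all three energies reduce to boundary pairings on $\partial D$:
\[
\|\nabla u\|_{L^2(\Omega)}^2=\langle \mathcal{S}_Dg,g\rangle_{L^2(\partial D)},\qquad
\|\nabla u\|_{L^2(\Omega\setminus\overline{D})}^2-\|\nabla u\|_{L^2(D)}^2=-2\langle \mathcal{S}_Dg,\mathscr{K}_D^\ast g\rangle_{L^2(\partial D)}.
\]

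I would then introduce the bilinear form $\langle f,g\rangle_\ast=\langle \mathcal{S}_Df,g\rangle_{L^2(\partial D)}$ on $H^{-1/2}(\partial D)$. By the first identity it coincides with $(\nabla\mathscr{S}_D\cdot\,|\,\nabla\mathscr{S}_D\cdot)_{L^2(\Omega)}$, so it is nonnegative; it is in fact definite, since $\|\nabla\mathscr{S}_Dg\|_{L^2(\Omega)}=0$ forces $\mathscr{S}_Dg$ to be constant and hence $g=\partial_\nu u|_--\partial_\nu u|_+=0$. As $\mathcal{S}_D\colon H^{-1/2}(\partial D)\to H^{1/2}(\partial D)$ is the isomorphism established in Appendix~A, $\langle\cdot,\cdot\rangle_\ast$ is an inner product equivalent to the $H^{-1/2}$ norm. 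The decisive algebraic input is the Plemelj (Calder\'on) symmetrization identity $\mathcal{S}_D\mathscr{K}_D^\ast=\mathscr{K}_D\mathcal{S}_D$, which in the present Neumann--Green framework I would derive from the symmetry of $N$ and the definition of $L$. It makes $\mathscr{K}_D^\ast$ self-adjoint for $\langle\cdot,\cdot\rangle_\ast$, because
\[
\langle \mathscr{K}_D^\ast f,g\rangle_\ast=\langle \mathcal{S}_D\mathscr{K}_D^\ast f,g\rangle=\langle \mathscr{K}_D\mathcal{S}_Df,g\rangle=\langle \mathcal{S}_Df,\mathscr{K}_D^\ast g\rangle=\langle f,\mathscr{K}_D^\ast g\rangle_\ast .
\]
Substituting the two energy identities, the quotient defining $\lambda_j^\pm(D)$ becomes $-2\langle\mathscr{K}_D^\ast g,g\rangle_\ast/\langle g,g\rangle_\ast$, i.e. $-2$ times the Rayleigh quotient of $\mathscr{K}_D^\ast$ on the Hilbert space $(H^{-1/2}(\partial D),\langle\cdot,\cdot\rangle_\ast)$.

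With this reduction the corollary follows from the spectral theorem. On the $C^{2,\alpha}$ boundary $\partial D$ the kernel $L$ is weakly singular, so $\mathscr{K}_D$, and thus $\mathscr{K}_D^\ast$, is compact; being self-adjoint for $\langle\cdot,\cdot\rangle_\ast$ it has a real discrete spectrum accumulating only at $0$. The recursive maximization over the $\langle\cdot,\cdot\rangle_\ast$-orthogonal complement of $\{g_{D,0}^+,\dots,g_{D,j-1}^+\}$ is exactly the Rayleigh--Ritz recursion for $-2\mathscr{K}_D^\ast$; hence $\mu_j^+(D)=-\tfrac12\lambda_j^+(D)$ are the negative eigenvalues of $\mathscr{K}_D^\ast$ with eigenfunctions the maximizers $g_{D,j}^+$, and symmetrically $\mu_j^-(D)=-\tfrac12\lambda_j^-(D)$ are the positive eigenvalues with eigenfunctions $g_{D,j}^-$. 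Using that $\mathscr{K}_D$ and $\mathscr{K}_D^\ast$ share the same spectrum yields the stated description of $\sigma(\mathscr{K}_D)$. The densities $h\in\ker(\mathscr{K}_D\mathcal{S}_D)=\ker\mathscr{K}_D^\ast$ (the equality because $\mathcal{S}_D$ is injective) produce potentials with vanishing numerator, hence make up precisely the (possible) zero eigenspace; together with the $g_{D,j}^\pm$ they are complete in $\mathscr{H}_D$ by Theorem~\ref{theorem5.1}. Elliptic smoothing of $\mathscr{K}_D^\ast$ on a $C^{2,\alpha}$ boundary gives $g_{D,j}^\pm\in H^{1/2}(\partial D)$.

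The step I expect to be the main obstacle is the self-adjointness ingredient: establishing the symmetrization identity $\mathcal{S}_D\mathscr{K}_D^\ast=\mathscr{K}_D\mathcal{S}_D$ and the definiteness of $\mathcal{S}_D$ within the Neumann--Green setting, where the smooth remainder $R(x,y)$ must be carried through every jump and pairing computation and the boundary has only finite $C^{2,\alpha}$ regularity. Once $\langle\cdot,\cdot\rangle_\ast$ is known to be an inner product and $\mathscr{K}_D^\ast$ compact and self-adjoint for it, the variational identification of the $\mu_j^\pm$ and of the eigenfunctions is routine. A secondary, purely bookkeeping, difficulty is keeping all signs consistent---the normalization of $E$ and of $N$, the jump relations, and the passage from $\mathscr{K}_D^\ast$ to $\mathscr{K}_D$---so that the eigenvalues emerge as $-\lambda_j^\pm/2$ rather than $+\lambda_j^\pm/2$.
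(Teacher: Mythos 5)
Your proposal is correct and follows essentially the same route as the paper: the energy identities via Green's formula and the jump relations, the positivity of $\mathcal{S}_D$ giving the inner product $(\mathcal{S}_D\cdot|\cdot)$, the Plemelj symmetrization $\mathscr{K}_D\mathcal{S}_D=\mathcal{S}_D\mathscr{K}_D^\ast$ (the paper's Lemma \ref{lemma5.3}), and compactness of $\mathscr{K}_D$, after which the paper simply invokes the abstract spectral result of Khavinson--Putinar--Shapiro while you unpack that black box into the explicit Courant--Fischer argument. The only substantive addition on your side is the (correct) care in distinguishing eigenfunctions of $\mathscr{K}_D^\ast$ from those of $\mathscr{K}_D$ and in identifying $\ker(\mathscr{K}_D\mathcal{S}_D)=\ker\mathscr{K}_D^\ast$, points the paper glosses over.
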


Set $\varphi_{D,j}^\pm = \mathscr{S}_Dg_{D,j}^\pm/\|\nabla \mathscr{S}_Dg_{D,j}^\pm\|_{L^2(\Omega )}$, $j\ge 1$ and
\[
\mathscr{H}_D^\pm =\mbox{span}\{ \varphi_{D,j}^\pm;\; j\ge 1\}.
\]
Let $\varphi_{D,j}^0$, $1\le j \le \aleph$ if $0<\aleph < \infty$ and $j\ge 1$ if $\aleph =\infty$, be an orthonormal basis of $\mathscr{H}_D^0=\{ \psi =\mathscr{S}_Dh;\; h\in \ker (\mathscr{K}_D\mathcal{S}_D)\}$. Here $\aleph \in [0,\infty]$ is the dimension of $\mathscr{H}_D^0$.

For simplicity convenience we only treat the case $\aleph =\infty$. The results in the case $\aleph <\infty$ are quite similar.

The preceding theorem says that $\mathscr{H}_D=\mathscr{H}_D^+\oplus \mathscr{H}_D^-\oplus \mathscr{H}_D^0$.

We are now ready to give the expansion of the solution of the BVP \eqref{1.1} in the basis $\{\varphi_{D,j}^\epsilon ,\; j\in I,\; \epsilon\in\{+,-,0\}\}$, where we set $I =\{j\ge 1\}$.

\begin{proposition}\label{proposition5.1}
 Let $u_D(k)$ be the solution of the BVP \eqref{1.1}.Then
\\
(i) $u_D$ admits the following expansion, where $\tilde{u}_D$ is as in the beginning of Section 2,
\[
u_D(k)=\tilde{u}_D+ \sum_{\underset{\epsilon \in\{+,-,0\}}{j\in I}}A_{D,j}^\epsilon (k) \varphi_{D,j}^\epsilon,
\]
the coefficients $A_{D,j}^\epsilon (k)$ satisfies, for $j\in I$ and $\epsilon\in \{+,-,0\}$,
\[
 (k-k_0)\sum_{\underset{\eta \in\{+,-,0\}}{i\in I}}A_{D,i}^\eta (k) (\nabla \varphi_{D,i}^\eta |\nabla \varphi_{D,j}^\epsilon)_{L^2(D)}+k_0A_{D,j}^\epsilon (k)=k_0B_{D,j}^\epsilon ,
 \]
where
\[
B_{D,j}^\epsilon = (f|\varphi_{D,j}^\epsilon)_{L^2(\partial \Omega)}-(\nabla \tilde{u}_D|\nabla \varphi_{D,j}^\epsilon)_{L^2(\Omega \setminus\overline{D})}.
\]
(ii) $k\in (0,\infty )\setminus\{k_0\}\rightarrow A_{D,j}^\epsilon (k)$, $j\in I$ and $\epsilon\in \{+,-,0\}$, is real analytic. Moreover, for any $\tilde{k}\in (0,\infty )\setminus\{k_0\}$, there exists $\delta >0$ so that, for any $j\in I$, $\epsilon\in \{+,-,0\}$ and $|k-\tilde{k}|<\delta$,  the series
\[
\sum_{\ell \in \mathbb{N}}\frac{1}{\ell !}\frac{d^\ell}{dk^\ell}A_{D,j}^\epsilon (\tilde{k})(k-\tilde{k})^\ell
\]
converges.
\end{proposition}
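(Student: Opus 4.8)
I would treat the two parts separately, part (i) resting on the spectral decomposition of Theorem~\ref{theorem5.1} and part (ii) on the quantitative analyticity already obtained in Lemma~\ref{lemma4.1}. For (i), the first step is to check that $v_D(k):=u_D(k)-\tilde u_D$ belongs to $\mathscr{H}_D$. This is essentially contained in the proof of Theorem~\ref{theoremA}: $v_D(k)$ is harmonic separately in $D$ and in $\Omega\setminus\overline{D}$ (since $u_D(k)$ is harmonic in each region while $\tilde u_D$ is harmonic in $\Omega\setminus\overline D$ and constant in $\overline D$), it lies in $H^1(\Omega)$, and it satisfies the homogeneous Neumann condition $\partial_\nu v_D(k)=0$ on $\partial\Omega$; hence $v_D(k)\in\mathscr{H}_D$. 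By Theorem~\ref{theorem5.1} and the normalization preceding the proposition, $\{\varphi_{D,j}^\epsilon\}$ is an orthonormal basis of $\mathscr{H}_D$ for the inner product $(\nabla\,\cdot\,|\nabla\,\cdot\,)_{L^2(\Omega)}$, so I can write
\[
v_D(k)=\sum_{\substack{j\in I\\ \epsilon\in\{+,-,0\}}}A_{D,j}^\epsilon(k)\,\varphi_{D,j}^\epsilon,\qquad A_{D,j}^\epsilon(k)=(\nabla v_D(k)\,|\,\nabla\varphi_{D,j}^\epsilon)_{L^2(\Omega)},
\]
the series converging in $\mathscr{H}_D$ (equivalently, the gradients converge in $L^2(\Omega)$). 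Adding back $\tilde u_D$ yields the asserted expansion of $u_D(k)$.

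The second step derives the linear system. Each $\varphi_{D,j}^\epsilon=\mathscr{S}_Dg$ has vanishing mean on $\partial\Omega$, because $N$ is normalized so that $\int_{\partial\Omega}N(x,y)\,d\sigma(x)=0$; thus $\varphi_{D,j}^\epsilon\in V$ and is an admissible test function in the weak formulation of $v_D(k)$ from the proof of Theorem~\ref{theoremA},
\[
\int_\Omega \mathbf{a}_D(k)\nabla v_D(k)\cdot\nabla w\,dx=\int_{\partial\Omega}fw\,d\sigma-k_0\int_{\Omega\setminus\overline D}\nabla\tilde u_D\cdot\nabla w\,dx,\quad w\in V.
\]
Writing $\mathbf{a}_D(k)=k_0+(k-k_0)\chi_D$, the piece $k_0\int_\Omega\nabla v_D(k)\cdot\nabla\varphi_{D,j}^\epsilon$ equals $k_0A_{D,j}^\epsilon(k)$ by orthonormality, while $(k-k_0)\int_D\nabla v_D(k)\cdot\nabla\varphi_{D,j}^\epsilon$ becomes, after inserting the convergent expansion of $\nabla v_D(k)$ and using continuity of the $L^2(D)$ inner product, the double sum $(k-k_0)\sum_{i,\eta}A_{D,i}^\eta(k)(\nabla\varphi_{D,i}^\eta|\nabla\varphi_{D,j}^\epsilon)_{L^2(D)}$. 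Identifying the right-hand side with $k_0B_{D,j}^\epsilon$ via the definition of $B_{D,j}^\epsilon$ produces exactly the stated system.

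For (ii), the key observation is that $A_{D,j}^\epsilon(k)=(\nabla u_D(k)\,|\,\nabla\varphi_{D,j}^\epsilon)_{L^2(\Omega)}-(\nabla\tilde u_D\,|\,\nabla\varphi_{D,j}^\epsilon)_{L^2(\Omega)}$ is the image of the $V$-valued map $k\mapsto u_D(k)$ under a fixed bounded linear functional, up to a $k$-independent constant. By Lemma~\ref{lemma4.1}, $k\mapsto u_D(k)\in V$ is real analytic, hence each $A_{D,j}^\epsilon$ is real analytic on $(0,\infty)\setminus\{k_0\}$ with $\frac{d^\ell}{dk^\ell}A_{D,j}^\epsilon(k)=(\nabla u_D^{(\ell)}(k)\,|\,\nabla\varphi_{D,j}^\epsilon)_{L^2(\Omega)}$ for $\ell\ge1$. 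Since $\|\nabla\varphi_{D,j}^\epsilon\|_{L^2(\Omega)}=1$, Cauchy--Schwarz together with the bound $\|u_D^{(\ell)}(k)\|_V\le C\ell!\,\phi(k)^{\ell+1}$ established in the proof of Lemma~\ref{lemma4.1} gives $|\frac{d^\ell}{dk^\ell}A_{D,j}^\epsilon(\tilde k)|\le C\ell!\,\phi(\tilde k)^{\ell+1}$ uniformly in $j$ and $\epsilon$, so the Taylor series converges for $|k-\tilde k|<\phi(\tilde k)^{-1}=\min(\tilde k,k_0)=:\delta$, which is the uniform radius required.

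The main obstacle is not a single deep estimate but the clean assembly of the three imported facts: the membership $v_D(k)\in\mathscr{H}_D$, the orthonormality and completeness of Theorem~\ref{theorem5.1}, and the quantitative analyticity of Lemma~\ref{lemma4.1}. The points demanding care are that the potentials $\varphi_{D,j}^\epsilon$ genuinely lie in $V$ (so that testing is legitimate) and that the term-by-term passage in the $L^2(D)$ inner product is justified by the $\mathscr{H}_D$-convergence of the expansion; once these are in place, the uniformity of $\delta$ over all indices $j,\epsilon$ follows automatically from the unit normalization of the basis.
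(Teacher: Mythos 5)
Your proposal is correct and follows essentially the same route as the paper's proof: decompose $u_D(k)=\tilde u_D+v_D(k)$, test the weak formulation against the basis functions $\varphi_{D,j}^\epsilon$, and insert the $\mathscr{H}_D$-convergent expansion of $\nabla v_D(k)$ to obtain the linear system, then invoke the real analyticity of $k\mapsto u_D(k)\in V$ from Lemma \ref{lemma4.1} for part (ii); your added verifications that $v_D(k)\in\mathscr{H}_D$ and that each $\varphi_{D,j}^\epsilon$ lies in $V$ (via the normalization of $N$) are points the paper leaves implicit. The only (harmless) divergence is in (ii): the paper differentiates the scalar identity $(k-k_0)(\nabla v_D(k)|\nabla\varphi_{D,j}^\epsilon)_{L^2(D)}+k_0A_{D,j}^\epsilon(k)=k_0B_{D,j}^\epsilon$ $\ell$ times and appeals to the convergence of the $V$-valued Taylor series of $v_D$, whereas you apply Cauchy--Schwarz directly to $\frac{d^\ell}{dk^\ell}A_{D,j}^\epsilon(\tilde k)=(\nabla u_D^{(\ell)}(\tilde k)|\nabla\varphi_{D,j}^\epsilon)_{L^2(\Omega)}$ together with the bound $\|u_D^{(\ell)}\|_V\le C\,\ell!\,\phi(k)^{\ell+1}$ from the proof of Lemma \ref{lemma4.1}, which is slightly more direct and makes the uniformity of $\delta=\min(\tilde k,k_0)$ over $j$ and $\epsilon$ explicit.
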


\begin{proof}
(i) Recall that $u_D$ has the following decomposition $u_D(k)=\tilde{u}_D+v_D(k)$. Observing that $v_D(k)$ satisfies $\partial_\nu v_D(k)=0$ (as an element of $H^{-1/2}(\partial \Omega )$) and
\[
{\rm div}(\mathbf{a}_D(k)\nabla v_D(k))=-{\rm div}(\mathbf{a}_D(k)\tilde{u}_D)\quad \rm{in}\; \mathscr{D}'(\Omega ),
\]
we find by using the generalized Green's formula
\begin{align}
k(\nabla v_D(k)|\nabla \varphi)_{L^2(D)}&+k_0(\nabla v_D(k)|\nabla \varphi)_{L^2(\Omega \setminus\overline{D})}\label{5.16}
\\
&=k_0(f|\varphi)_{L^2(\partial \Omega)}-k_0(\nabla \tilde{u}_D|\nabla \varphi)_{L^2(\Omega \setminus\overline{D})},\quad \varphi \in H^1(\Omega ).\nonumber
\end{align}

We expand $v_D(k)$ in the basis $\{\varphi_{D,j}^\epsilon ,\; j\in I,\; \epsilon\in\{+,-,0\}\}$:
\[
v_D(k)=\sum_{\underset{\epsilon \in\{+,-,0\}}{j\in I}}A_{D,j}^\epsilon (k) \varphi_{D,j}^\epsilon.
\]
Taking $\varphi=\varphi_{D,j}^\epsilon$ in \eqref{5.16}, we obtain
\begin{equation}\label{5.17}
(k-k_0)(\nabla v_D(k)|\nabla \varphi_{D,j}^\epsilon)_{L^2(D)}+k_0A_{D,j}^\epsilon (k)=k_0B_{D,j}^\epsilon,
\end{equation}
for $j\in I$ and  $\epsilon\in \{+,-,0\}$.

 Whence
 \begin{equation}\label{5.21}
 (k-k_0)\sum_{\underset{\eta \in\{+,-,0\}}{i\in I}}A_{D,i}^\eta (k) (\nabla \varphi_{D,i}^\eta |\nabla \varphi_{D,j}^\epsilon)_{L^2(D)}+k_0A_{D,j}^\epsilon (k)=k_0B_{D,j}^\epsilon ,
 \end{equation}
 for $j\in I$ and $\epsilon\in \{+,-,0\}$.

 (ii) 
We know from the preceding section that $k\in (0,\infty )\setminus\{k_0\}\rightarrow v_D(k)\in V$ is real analytic. Then so is $k\in (0,\infty )\setminus\{k_0\}\rightarrow A_{D,j}^\epsilon (k)\in \mathbb{C}$ and $k\in (0,\infty )\setminus\{k_0\}\rightarrow (\nabla v_D(k)|\nabla \varphi_{D,j}^\epsilon)_{L^2(D)}\in \mathbb{C}$, $j\in I$, $\epsilon\in \{+,-,0\}$.

We get by taking successively the derivative in \eqref{5.17} with respect to $k$
\begin{align}
(k-k_0)(\nabla v_D^{(\ell)}(k)|\nabla \varphi_{D,j}^\epsilon)_{L^2(D)}&+k_0\frac{d^\ell}{dk^\ell}A_{D,j}^\epsilon (k)\label{5.19}
\\
&=-\ell (\nabla v_D^{(\ell-1)}(k)|\nabla \varphi_{D,j}^\epsilon)_{L^2(D)},\nonumber
\end{align}
for $j\in I$, $\epsilon\in \{+,-,0\}$ and $\ell \in \mathbb{N}\setminus\{0\}$.

The choice of $k=\tilde{k}$ in \eqref{5.19} entails
\begin{align}
k_0\frac{d^\ell}{dk^\ell}A_{D,j}^\epsilon (\tilde{k})=-(\tilde{k}-k_0)&(\nabla v_D^{(\ell)}(\tilde{k})|\nabla \varphi_{D,j}^\epsilon)_{L^2(D)}+\label{5.20}
\\
&-\ell (\nabla v_D^{(\ell-1)}(\tilde{k})|\nabla \varphi_{D,j}^\epsilon)_{L^2(D)},\nonumber
\end{align}
for $j\in I$, $\epsilon\in \{+,-,0\}$ and $\ell \in \mathbb{N}\setminus\{0\}$.

We have
\[
\left|(\nabla v_D^{(\ell)} (\tilde{k})|\nabla \varphi_{D,j}^\epsilon)_{L^2(D)}\right|\le \|\nabla v_D^{(\ell )}(\tilde{k})\|_{L^2(\Omega)},\quad \ell \in \mathbb{N}.
\]
On the other hand, the series
\[
\sum_{\ell \in \mathbb{N}}\frac{1}{\ell !}v_D^{(\ell )}(\tilde{k})(k-\tilde{k})^\ell
\]
converges in $V$ provided that $|k-\tilde{k}|\le \delta$, for some $\delta$. Therefore, in light of \eqref{5.20}, we can assert that the series
\[
\sum_{\ell \in \mathbb{N}}\frac{1}{\ell !}\frac{d^\ell}{dk^\ell}A_{D,j}^\epsilon (\tilde{k})(k-\tilde{k})^\ell
\]
also converges whenever $|k-\tilde{k}|\le \delta$.
The proof is then complete.
\qed
\end{proof}

\begin{remark}
{\rm
Unfortunately, computing all the terms $(\nabla \varphi_{D,i}^\eta |\nabla \varphi_{D,j}^\epsilon)_{L^2(D)}$ seems to be not possible, especially for $\eta \ne \epsilon$. Let us compute those equal to zero. As $\varphi_{D,j}^\pm$ is a solution of a minimisation problem, we obtain in a standard way
\[
\lambda_k^\pm (D)\int_\Omega \nabla \varphi_{D,j}^\pm \cdot \nabla \varphi dx= \int_{\Omega\setminus\overline{D}} \nabla \varphi_{D,j}^\pm \cdot \nabla \varphi dx-\int_D \nabla \varphi_{D,j}^\pm \cdot \nabla \varphi dx,
\]
for any $\varphi \in \mbox{span}\{\varphi_{D,1}^\pm,\ldots \varphi_{D,j-1}^\pm\}^\bot$.

In particular,
\begin{equation}\label{5.22}
0= \int_{\Omega\setminus\overline{D}} \nabla \varphi_{D,j}^\pm \cdot \nabla \varphi_{D,i}^\pm dx-\int_D \nabla \varphi_{D,j}^\pm \cdot \nabla \varphi_{D,i}^\pm dx,\quad i>j.
\end{equation}
But
\[
0=\int_{\Omega\setminus\overline{D}} \nabla \varphi_{D,j}^\pm \cdot \nabla \varphi_{D,i}^\pm dx+\int_D \nabla \varphi_{D,j}^\pm \cdot \nabla \varphi_{D,i}^\pm dx,\quad i>j.
\]
This and \eqref{5.22} yield
\begin{equation}\label{5.23}
(\nabla \varphi_{D,i}^\pm |\nabla \varphi_{D,j}^\pm)_{L^2(D)}=(\nabla \varphi_{D,i}^\pm |\nabla \varphi_{D,j}^\pm)_{L^2(\Omega\setminus\overline{D})}=0,\quad i>j.
\end{equation}

We have similarly
\[
0= \int_{\Omega\setminus\overline{D}} \nabla \varphi_{D,j}^0 \cdot \nabla \varphi dx-\int_D \nabla \varphi_{D,j}^0 \cdot \nabla \varphi dx,
\]
for any $\varphi \in \mathscr{H}_D^0$. Hence
\begin{equation}\label{5.24}
0= \int_{\Omega\setminus\overline{D}} \nabla \varphi_{D,j}^0 \cdot \nabla \varphi_{D,i}^0dx-\int_D \nabla \varphi_{D,j}^0 \cdot \nabla \varphi_{D,i}^0 dx,\quad i,j\in I.
\end{equation}
As before, we deduce from \eqref{5.24}
\begin{equation}\label{5.25}
(\nabla \varphi_{D,i}^0 |\nabla \varphi_{D,j}^0)_{L^2(D)}=(\nabla \varphi_{D,i}^0 |\nabla \varphi_{D,j}^0)_{L^2(\Omega\setminus\overline{D})}=0,\quad i,j\in I,\; i\ne j.
\end{equation}
}
\end{remark}

\section*{Appendix A: Spectral analysis of the NP operator}\label{se_appendix}

Prior to proceed to the spectral analysis, we define some integral operator with weakly singular kernels. Let $D\Subset \Omega$ be a bounded domain of $\mathbb{R}^n$, $n\ge 2$, of class $C^{1,\alpha}$, for some $0<\alpha <1$.

Denote by $\nu$ the unit normal outward vector field on $\partial \Omega$. Then a slight modification of the proof of \cite[Lemma 3.15, page 124]{Fo} yields
\[
|(x-y)\cdot \nu (x)|\le C|x-y|^{1+\alpha},\quad x,y\in \partial D,\; x\ne y.
\]
Here the constant $C$ only depends on $D$. Hence
\begin{equation}\label{e1}
\left| \frac{(x-y)\cdot \nu (x)}{|x-y|^n}\right|\le \frac{C}{|x-y|^{n-1-\alpha}},\quad x,y\in \partial D.
\end{equation}
Define the integral operator $K_D^\ast:L^2(\partial D)\rightarrow L^2(\partial D)$  by
\[
K_D^\ast f(x)=\int_{\partial D} \frac{(x-y)\cdot \nu (x)}{|x-y|^n}f(y)d\sigma(y),\quad f\in L^2(D).
\]
Estimate \eqref{e1} says that the kernel of $K_D^\ast $ is weakly singular and therefore it is compact (see for instance \cite[Section 2.5.5, page 128]{Tr}).

Note that $K_D^\ast$ is nothing but the adjoint of the operator $K:L^2(\partial D)\rightarrow L^2(\partial D)$ given as follows
\[
K_D f(x)=\int_{\partial D} \frac{(x-y)\cdot \nu (y)}{|x-y|^n}f(y)d\sigma(y),\quad f\in L^2(D).
\]
As $K_D^\ast$, $K_D$ is also an integral operator with weakly singular kernel and then it is also compact.

Denote by $E$ the usual fundamental solution of the Laplacian in the whole space. That is
\[
E(x)=\left\{
\begin{array}{ll} -\ln |x|/(2\pi) &\mbox{if}\; n=2. \\ |x|^{2-n}/((n-2)\omega_n)\quad &\mbox{if}\; n\ge 3. \end{array}
\right.
\]
Here $\omega_n=|\mathbb{S}^{n-1}|$.

Recall that the single layer potential $S_D$ is the integral operator with kernel $E(x-y)$:
\[
S_Df(x)=\int_{\partial D}E(x-y)f(y)d\sigma(y),\quad f\in L^2(\partial D),\; x\in \mathbb{R}^n \setminus \partial D.
\]

Before stating a jump relation satisfied by $S_Df$, we introduce the notations, where $w\in C^1(\mathbb{R}^n)$ and $x\in \partial D$,
\begin{align*}
&w_{|\pm}(x)=\lim_{t\searrow 0}w(x\pm t\nu (x)),
\\
&\partial_\nu w(x)_{|\pm}=\lim_{t\searrow 0}\nabla w(x\pm t\nu (x))\cdot \nu (x).
\end{align*}

For any $f\in L^2(\partial D)$, $\partial_\nu S_Df(x)_{|\pm}$ exists as an element of $L^2(\partial D)$ and the following jump relation holds
\begin{equation}\label{e2}
\partial_\nu S_Df(x)_{|\pm}(x)=\left(\pm \frac{1}{2}+K_D^\ast \right)f(x)\quad \mbox{a.e.}\; x\in \partial \Omega .
\end{equation}
We refer for instance to \cite[Theorem 2.4 in page 16]{AK} and its comments.

For $y\in \Omega$, consider the following BVP
\begin{equation}\label{e3}
\left\{
\begin{array}{ll}
\Delta R=0\quad &\mbox{in}\; \Omega ,
\\
\partial_\nu R=-\partial_\nu E(\cdot -y) &\mbox{on}\; \partial \Omega .
\end{array}
\right.
\end{equation}

As $\partial_\nu E(\cdot -y) \in L^2(\partial \Omega )$, in light of \cite[Theorem 2, page 204 and Remarks (b) page 206]{JK}, the BVP \eqref{e3} has a unique solution $R(\cdot ,y)\in H^{3/2}(\Omega )$ so that
\[
\int_{\partial \Omega} R(x,y)dx=\varkappa (y),
\]
where the constant $\varkappa (y)$ is to be determined hereafter.

Define then $N$ by
\[
N(x,y)=E(x-y)+R(x,y),\quad x,y\in \Omega ,\; x\ne y.
\]
The function $N$ obeys to the following properties, where $y\in \Omega$ is arbitrary,
\[
\Delta N(\cdot ,y)=\delta_y,\quad \partial _\nu N(\cdot ,y)_{|\partial \Omega}=0.
\]
We fix in the rest of this text $\varkappa (y)$ in such a way that
\begin{equation}\label{N}
\int_{\partial \Omega} N(x,y)d\sigma (x)=0.
\end{equation}
The function $N$ is usually called the Neumann-Green function.

Mimicking the proof of \cite[Lemma 2.14, page 30]{AK},  we get $N(x,y)=N(y,x)$, $x,y\in \Omega$, $x\ne y$. Hence $R(x,y)=R(y,x)$, $x,y\in \Omega$. By interior regularity for harmonic functions $R(\cdot ,y)$, $y\in \Omega$, belongs to $C^\infty (\Omega )$ and consequently $R(x,\cdot )$, $x\in \Omega$ is also in $C^\infty (\Omega )$ implying that $R\in C^\infty (\Omega \times \Omega )$.

Consider the integral operators acting on $L^2(\partial D)$ as follows
\[
\mathscr{S}_Df(x)=\int_{\partial D}N(x,y)f(y)d\sigma (y),\quad f\in L^2(\partial D),\; x\in \Omega .
\]
Clearly $\mathscr{S}_D=S_D+S_D^0$, where $S_D^0$ is the integral with (smooth) kernel $R$, i.e.
\[
S_D^0f(x)=\int_{\partial D} R(x,y)f(y)d\sigma (y,)\quad f\in L^2(\partial D),\; x\in \Omega .
\]
Using  \eqref{e2} we find that $\mathscr{S}_D$ obeys to the following the jump condition:
\begin{equation}\label{a4}
\partial_\nu \mathscr{S}_Df(x)_{|\pm}(x)=\left(\pm \frac{1}{2}+\mathscr{K}_D^\ast \right)f(x)\quad \mbox{a.e.}\; x\in \partial \Omega .
\end{equation}
Here $\mathscr{K}_D^\ast =K_D^\ast +K_{D,0}^\ast$, where $K_{D,0}^\ast$ is the integral operator with kernel $\partial_{\nu (x)}R$, which is the dual of the integral operator $K_{D,0}$ whose kernel is $\partial_{\nu (y)}R$ (thank to the symmetry of $R$).

We get in particular that $\mathscr{K}_D^\ast:L^2(\partial D)\rightarrow L^2(\partial D)$ is compact. More specifically, $\mathscr{K}_D^\ast:L^2(\partial D)\rightarrow H^1(\partial D)$ is bounded (see for instance \cite[Theorem 2.11, page 28]{AK}).

We defined in Section 5 $\mathcal{S}_D=\mathscr{S}_D{_{|\partial D}}$ that we consider as a bounded operator on $L^2(\partial D)$ and set
\[
\mathcal{H}_D=\{u\in H^1(\Omega );\; \Delta u=0\; \mbox{in}\; \mathscr{D}'(\Omega \setminus \partial D)\}.
\]
Define on $\mathcal{H}_D$ the positive hermitian form
\[
(u|v)_{\mathcal{H}_D}=\int_\Omega \nabla u\cdot \overline{\nabla v}dx.
\]
The corresponding semi-norm is denoted by
\[
\|u\|_{\mathcal{H}_D}=(u|u)_{\mathcal{H}_D}^{1/2}.
\]

Let $u_-=u_{|D}$ and $u_+=u_{|\Omega \setminus\overline{D}}$. As $u_-\in H^1(D)$ and $\Delta u=0$ in $\mathscr{D}'(D)$, we know from the usual trace theorems that $\partial_\nu u_-\in H^{-1/2}(D)$. Similarly, we have $\partial_\nu u_+\in H^{-1/2}(\partial D)\cap H^{-1/2}(\partial \Omega )$. Therefore, according to generalized Green's formula, for any $u,v\in \mathcal{H}_D$, we have
\begin{align}
&\int_D\nabla u_-\cdot \nabla v_-=\langle \partial_\nu u_-|\overline{v_-}\rangle_{1/2,\partial D},\label{5.6}
\\
&\int_{\Omega \setminus\overline{D}}\nabla u_+\cdot \nabla v_+=-\langle \partial_\nu u_+|\overline{v_+}\rangle_{1/2,\partial D} +\langle \partial_\nu u_+|\overline{v_+}\rangle_{1/2,\partial \Omega}.\label{5.7}
\end{align}
The symbol $\langle \cdot |\cdot \rangle_{1/2,\Gamma}$ denotes the duality pairing between $H^{1/2}(\Gamma )$ and its dual $H^{-1/2}(\Gamma )$, with $\Gamma =\partial D$ or $\Gamma=\partial \Omega$.

Taking the sum side by side in inequalities \eqref{5.6} and \eqref{5.7}, we find
\begin{equation}\label{5.8}
(u|v)_{\mathcal{H}_D}= \langle \partial_\nu u_- -\partial_\nu u_+|\overline{v}\rangle_{1/2,\partial D} +\langle \partial_\nu u_+|\overline{v_+}\rangle_{1/2|\partial \Omega},
\end{equation}
where we used that $v_-=v_+=v$ in $\partial D$.

We apply \eqref{5.8} to $u=v=\mathscr{S}_Df$, with $f\in L^2(\partial D)$. Taking into account that $\partial_\nu u_+=0$ on $\partial \Omega$, we obtain
\[
\|\mathscr{S}_Df\|_{\mathcal{H}_D}=\langle \partial_\nu \mathscr{S}_Df_{|-}- \partial_\nu \mathscr{S}_Df_{|+}|\overline{\mathcal{S}_Df}\rangle_{1/2,\partial D}.
\]
This and the jump condition \eqref{a4} entail
\begin{equation}\label{5.9}
\|\mathscr{S}_Df\|_{\mathcal{H}_D}^2=\langle f|\overline{\mathcal{S}_Df}\rangle_{1/2,\partial D}=(f|\mathcal{S}_Df)_{L^2(\partial D)}.
\end{equation}
Here $(\cdot |\cdot )_{L^2(\partial D)}$ is the usual scalar product on $L^2(\partial D)$.

In other words, we proved that $\mathcal{S}_D:L^2(\partial D)\rightarrow L^2(\partial D)$ is strictly positive operator. Define, as in \cite{KPS}, $H_D^{-}$ to be the completion of $L^2(\partial D)$ with respect to the norm $\|\sqrt{\mathcal{S}_D}f\|_{L^2(\partial D)}=(f|\mathcal{S}_Df)_{L^2(\partial D)}$. Let $H_D^{+}=\mbox{R}(\sqrt{\mathcal{S}_D})$, the range of $\sqrt{\mathcal{S}_D}$, that can be regarded as the domain of the unbounded operator $\sqrt{\mathcal{S}_D}^{-1}$. We observe that $H_D^{+}$ is complete for the norm induced by the form $(\sqrt{\mathcal{S}_D}^{-1}f,\sqrt{\mathcal{S}_D}^{-1}f)_{L^2(\partial D)}$. Therefore $\mathcal{S}_D$ can be extended by continuity as an isomorphism, still denoted by $\mathcal{S}_D$, from $H_D^{-}$ onto $H_D^{+}$ and the pairing
\[
(\sqrt{\mathcal{S}_D}f|g)_{L^2(\partial D)}=(f|\sqrt{\mathcal{S}_D}g)_{L^2(\partial D)}
\]
defines a duality pairing between $H_D^{+}$ and $H_D^{-}$, with respect to the pivot space $L^2(\partial D)$.

Recall that the following closed subspace of $\mathcal{H}_D$ was introduced in Section 5.
\[
\mathscr{H}_D=\{u\in H^1(\Omega );\; \Delta u=0\; \mbox{in}\; \mathscr{D}'(\Omega \setminus \partial D)\; \mbox{and}\; \partial_\nu u_{|\partial \Omega} =0\}.
\]
Note that we have seen before that $\partial_\nu u_{|\partial \Omega}$ is an element of $H^{-1/2}(\partial \Omega )$. In that case \eqref{5.8} takes the form
\begin{equation}\label{5.10}
(u|v)_{\mathcal{H}_D}= \langle \partial_\nu u_- -\partial_\nu u_+|\overline{v}\rangle_{1/2,\partial D},\quad u\in \mathcal{H}_D,\; v\in \mathscr{H}_D.
\end{equation}

\begin{lemma}\label{lemma5.1}
For any $g\in H^{1/2}(\partial D)$, there exists a unique $v=\mathscr{E}_Dg\in \mathscr{H}_D$ so that $v_{|\partial D}=g$ and
\begin{equation}\label{5.11}
\|\mathscr{E}_Dg\|_{H^1(\Omega )}\le C\|g\|_{H^{1/2}(\partial D)},
\end{equation}
where the constant $C$ only depends on $\Omega$ and $D$.
\end{lemma}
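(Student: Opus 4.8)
The plan is to build $\mathscr{E}_D g$ by solving two uncoupled boundary value problems, one in $D$ and one in the shell $\Omega\setminus\overline{D}$, and then gluing the two pieces across $\partial D$. First I would treat the interior Dirichlet problem: find $v_-\in H^1(D)$ harmonic in $D$ with $v_-=g$ on $\partial D$. Picking a lift $G_-\in H^1(D)$ of $g$ with $\|G_-\|_{H^1(D)}\le C\|g\|_{H^{1/2}(\partial D)}$ via the trace theorem and minimizing the Dirichlet energy over $G_-+H_0^1(D)$, Lax--Milgram combined with the Poincar\'e inequality on $H_0^1(D)$ produces a unique $v_-$ obeying $\|v_-\|_{H^1(D)}\le C\|g\|_{H^{1/2}(\partial D)}$.

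Next I would solve the mixed problem in $\Omega\setminus\overline{D}$: find $v_+$ harmonic there with $v_+=g$ on $\partial D$ and $\partial_\nu v_+=0$ on $\partial\Omega$. Writing $v_+=G_++w_0$ with $G_+\in H^1(\Omega\setminus\overline{D})$ a controlled lift of $g$, I would minimize $\int_{\Omega\setminus\overline{D}}|\nabla w|^2$ over $w\in G_++W$, where $W$ denotes the closed subspace of $H^1(\Omega\setminus\overline{D})$ of functions vanishing on $\partial D$. The zero Neumann datum on $\partial\Omega$ is the natural boundary condition and hence is automatically satisfied by the minimizer rather than imposed as a constraint. Coercivity for Lax--Milgram is furnished by the Poincar\'e inequality for functions vanishing on the nontrivial boundary portion $\partial D$; this controls both $\|\nabla w_0\|_{L^2}$ and, through Poincar\'e, $\|w_0\|_{L^2}$, so that $\|v_+\|_{H^1(\Omega\setminus\overline{D})}\le C\|g\|_{H^{1/2}(\partial D)}$.

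I would then glue: since $v_-$ and $v_+$ carry the same trace $g$ on $\partial D$, the function $v$ equal to $v_-$ on $D$ and to $v_+$ on $\Omega\setminus\overline{D}$ lies in $H^1(\Omega)$, is harmonic in $\mathscr{D}'(\Omega\setminus\partial D)$, and satisfies $\partial_\nu v_{|\partial\Omega}=0$; thus $\mathscr{E}_D g:=v\in\mathscr{H}_D$ with $v_{|\partial D}=g$, and \eqref{5.11} follows at once from $\|v\|_{H^1(\Omega)}^2=\|v_-\|_{H^1(D)}^2+\|v_+\|_{H^1(\Omega\setminus\overline{D})}^2$. For uniqueness, if $v_1,v_2\in\mathscr{H}_D$ share the trace $g$, then $w=v_1-v_2$ vanishes on $\partial D$: harmonicity in $D$ forces $w\equiv0$ there, while applying the generalized Green formula \eqref{5.7} to $w$ in $\Omega\setminus\overline{D}$ makes both boundary terms vanish (as $w=0$ on $\partial D$ and $\partial_\nu w=0$ on $\partial\Omega$), whence $\|\nabla w\|_{L^2(\Omega\setminus\overline{D})}=0$ and $w\equiv0$.

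The step I expect to be most delicate is the coercivity of the Dirichlet form for the shell problem, namely the Poincar\'e inequality on the subspace $W$ of functions vanishing on $\partial D$: this is exactly what removes the zero mode that a pure Neumann condition on $\partial\Omega$ would otherwise leave, and it must be invoked using the $C^{0,1}$ geometry of $\Omega\setminus\overline{D}$ so that the constant depends only on $\Omega$ and $D$. A secondary point requiring care is the rigorous justification that the piecewise-defined $v$ belongs to $H^1(\Omega)$, which hinges on the equality of the two one-sided traces on $\partial D$.
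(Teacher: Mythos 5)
Your construction is exactly the one the paper uses: solve the interior Dirichlet problem in $D$ and the mixed Dirichlet--Neumann problem in $\Omega\setminus\overline{D}$ with the same trace $g$ on $\partial D$, then glue; the paper states this in two lines and you have simply supplied the standard Lax--Milgram, Poincar\'e, and uniqueness details that it leaves implicit. The argument is correct.
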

\begin{proof}
Take $v\in H^1(\Omega )$ so that $v_{|D}=v_-$ and $v_{\Omega \setminus\overline{D}}=v_+$, where $v_-$ and $v_+$ are the respective variational solutions of the BVP's
\[
\left\{
\begin{array}{ll}
\Delta v_-=0\quad &\mbox{in}\; D ,
\\
v_-=g &\mbox{on}\; \partial D ,
\end{array}
\right.
\]
and
\[
\left\{
\begin{array}{ll}
\Delta v_+=0\quad &\mbox{in}\; \Omega\setminus\overline{D} ,
\\
v_+= g &\mbox{on}\; \partial D,
\\
\partial_\nu v_+=0 &\mbox{on}\; \partial \Omega .
\end{array}
\right.
\]
Clearly, $v\in \mathscr{H}_D$ and \eqref{5.11} holds.
\qed
\end{proof}

\begin{lemma}\label{lemma5.2}
There holds $H_D^{+}=H^{1/2}(\partial D)$.
\end{lemma}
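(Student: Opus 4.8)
The plan is to identify $H_D^+$ with $H^{1/2}(\partial D)$ by first identifying the smaller space $H_D^-$ with $H^{-1/2}(\partial D)$ and then transporting this through the $L^2$-duality between $H_D^+$ and $H_D^-$ that was set up just before the statement. The two ingredients I would lean on are: that $\mathcal{S}_D$ is a bounded, self-adjoint (by symmetry of $N$), strictly positive operator on $L^2(\partial D)$, the strict positivity being \eqref{5.9}; and that $\mathcal{S}_D$ is an \emph{isomorphism} from $H^{-1/2}(\partial D)$ onto $H^{1/2}(\partial D)$, as established earlier in this appendix.

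First I would prove the auxiliary norm equivalence $H_D^-=H^{-1/2}(\partial D)$. Since $L^2(\partial D)$ is dense in both spaces, it suffices to compare the two norms on $f\in L^2(\partial D)$, where $\|f\|_{H_D^-}^2=(f|\mathcal{S}_Df)_{L^2(\partial D)}$. The upper bound $\|f\|_{H_D^-}^2\le C\|f\|_{H^{-1/2}(\partial D)}^2$ is immediate: the $L^2$ product extends to the $H^{-1/2}$–$H^{1/2}$ pairing, so $(f|\mathcal{S}_Df)_{L^2(\partial D)}\le\|f\|_{H^{-1/2}(\partial D)}\|\mathcal{S}_Df\|_{H^{1/2}(\partial D)}\le C\|f\|_{H^{-1/2}(\partial D)}^2$ by boundedness of $\mathcal{S}_D$. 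The reverse coercivity bound is the crux. I would write $\|f\|_{H^{-1/2}(\partial D)}=\sup_{\phi}(f|\phi)_{L^2(\partial D)}/\|\phi\|_{H^{1/2}(\partial D)}$, put $\phi=\mathcal{S}_Dg$ (using surjectivity of $\mathcal{S}_D$), and apply Cauchy–Schwarz in the inner product $(\cdot|\mathcal{S}_D\cdot)_{L^2(\partial D)}$, i.e. the $H_D^-$ inner product, to get $(f|\mathcal{S}_Dg)_{L^2(\partial D)}\le\|f\|_{H_D^-}\|g\|_{H_D^-}$. Then $\|g\|_{H_D^-}^2=(g|\mathcal{S}_Dg)_{L^2(\partial D)}\le\|g\|_{H^{-1/2}(\partial D)}\|\phi\|_{H^{1/2}(\partial D)}\le C\|\phi\|_{H^{1/2}(\partial D)}^2$, the last step using boundedness of $\mathcal{S}_D^{-1}:H^{1/2}\to H^{-1/2}$. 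Combining, $\|f\|_{H^{-1/2}(\partial D)}\le C\|f\|_{H_D^-}$. A density argument handles the restriction to $g\in L^2(\partial D)$ needed to make the $L^2$ manipulations legitimate: $\mathcal{S}_D(L^2(\partial D))$ is dense in $H^{1/2}(\partial D)$ because $L^2$ is dense in $H^{-1/2}$ and $\mathcal{S}_D$ is an isomorphism, so it is enough to take the supremum over this dense family of $\phi$.

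Having shown $H_D^-=H^{-1/2}(\partial D)$ with equivalent norms, I would conclude by duality. It is recorded before the statement that the pairing $(\sqrt{\mathcal{S}_D}f|g)_{L^2(\partial D)}=(f|\sqrt{\mathcal{S}_D}g)_{L^2(\partial D)}$ identifies $H_D^+$ with the dual of $H_D^-$ relative to the pivot space $L^2(\partial D)$. Two equivalent norms on $H_D^-=H^{-1/2}(\partial D)$ induce equivalent dual norms, so the $L^2$-dual of $H_D^-$ coincides, as a set and with equivalent norm, with the $L^2$-dual of $H^{-1/2}(\partial D)$, which is $H^{1/2}(\partial D)$. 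Hence $H_D^+=H^{1/2}(\partial D)$. The hard part is the coercivity estimate of the middle paragraph; the rest is soft functional analysis. The one subtlety to watch is the consistency of the pivot: the same $L^2(\partial D)$ product must realize both the $H^{-1/2}$–$H^{1/2}$ duality and the $H_D^+$–$H_D^-$ duality, and one must justify reading the pairing $(f|\mathcal{S}_Dg)_{L^2(\partial D)}$ as a genuine $L^2$ product, which is legitimate since $f\in L^2(\partial D)$ and $\mathcal{S}_Dg\in H^{1/2}(\partial D)\subset L^2(\partial D)$.
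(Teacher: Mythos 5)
Your argument is circular at its crux. The ``isomorphism $\mathcal{S}_D:H^{-1/2}(\partial D)\rightarrow H^{1/2}(\partial D)$'' that you invoke both for the surjectivity step (writing $\phi=\mathcal{S}_Dg$) and for the coercivity step (boundedness of $\mathcal{S}_D^{-1}:H^{1/2}\rightarrow H^{-1/2}$) is not available before Lemma \ref{lemma5.2}: the only isomorphism established at that point in the appendix is the tautological one from $H_D^{-}$ onto $H_D^{+}$ coming from the abstract completion construction. The statement announced in Section 5, that $\mathcal{S}_D$ is an isomorphism from $H^{-1/2}(\partial D)$ onto $H^{1/2}(\partial D)$, is precisely what Lemma \ref{lemma5.2} (together with the duality identification of $H_D^{-}$) is meant to deliver; it cannot be cited as an ingredient. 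Nor can you fall back on classical layer-potential theory here, since $\mathcal{S}_D$ is built from the Neumann--Green kernel $N=E+R$ rather than from $E$ alone, and the whole point of the positivity identity \eqref{5.9} is to obtain invertibility for this modified operator (recall also that even the standard single layer fails to be invertible on $H^{-1/2}$ in dimension two for domains of unit logarithmic capacity). So the ``hard part'' of your plan, the estimate $\|f\|_{H^{-1/2}(\partial D)}\le C\|f\|_{H_D^{-}}$, is exactly the content you were supposed to prove, and your derivation of it presupposes the conclusion.

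The paper's proof supplies the missing mechanism. For the inclusion $H^{1/2}(\partial D)\subset H_D^{+}$ it does not invert $\mathcal{S}_D$ at all: it tests $f\in L^2(\partial D)$ against $g\in H^{1/2}(\partial D)$ via the energy identity \eqref{5.10} applied to $u=\mathscr{S}_Df$ and the harmonic extension $v=\mathscr{E}_Dg$ of Lemma \ref{lemma5.1}, giving $(f|g)_{L^2(\partial D)}=(\mathscr{S}_Df|\mathscr{E}_Dg)_{\mathcal{H}_D}$, hence by \eqref{5.9} and \eqref{5.11} the bound $|(f|g)_{L^2(\partial D)}|\le C\|g\|_{H^{1/2}(\partial D)}\|\sqrt{\mathcal{S}_D}f\|_{L^2(\partial D)}$; Riesz representation in the completion $H_D^{-}$ then yields $g=\sqrt{\mathcal{S}_D}k\in H_D^{+}$. (Taking the supremum over such $g$ is also the legitimate route to your coercivity estimate, if you want to keep your duality formulation of the converse.) For the reverse inclusion the paper approximates $g\in H_D^{-}$ by $g_k\in L^2(\partial D)$, observes that $(\mathscr{S}_Dg_k)$ is Cauchy in the complete space $\mathscr{H}_D$ by \eqref{5.9}, and uses continuity of the trace to conclude that $f=\mathcal{S}_Dg$ is an $H^{1/2}(\partial D)$ trace. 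You should replace your middle paragraph by this extension-operator argument; the soft duality framing of your last paragraph can then be retained, but as stated your proof does not go through.
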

\begin{proof}
Let $g\in H^{1/2}(\partial D)$ and $f\in L^2(\partial D)$. Apply then \eqref{5.10}, with $u=\mathscr{S}_Df$ and $v=\mathscr{E}_Dg$ in order to obtain
\[
(f|g)_{L^2(\partial D)}= (\mathscr{S}_Df|\mathscr{E}_Dg)_{\mathcal{H}_D}.
\]
Whence
\[
\left| (f|g)_{L^2(\partial D)}\right|\le C\|\mathscr{E}_Dg\|_{\mathcal{H}_D}\|\mathscr{S}_Df\|_{\mathcal{H}_D}\le C\|g\|_{H^{1/2}(\partial D)}\|\sqrt{\mathcal{S}_D}f\|_{L^2(\partial D)}.
\]
In other words, the linear form $f\in L^2(\partial D)\mapsto (f|g)_{L^2(\partial D)}$ is bounded for the norm $\|\sqrt{\mathcal{S}_D}f\|_{L^2(\partial D)}$. Therefore, according to Riesz's representation theorem, there exists $k\in L^2(\partial D)$ so that
\[
(f|g)_{L^2(\partial D)}=(f|\sqrt{\mathcal{S}_D}k).
\]
Since $f$ is arbitrary, we deduce that $g=\sqrt{\mathcal{S}_D}k$ and hence $g\in H_D^{+}$. That is $H^{1/2}(\partial D)\subset H_D^{+}$.

Conversely, as $f\in  H_D^{+}$ has the form $f=\mathcal{S}_Dg$ for some $g\in  H_D^{-}$, the coupling $(S_Dg|g):=(\sqrt{\mathcal{S}_D}g|\sqrt{\mathcal{S}_D}g)_{L^2(\partial D)}$ is well defined. Let $(g_k)$ be a sequence in $L^2(\partial D)$ converging to $g$ in the topology of $H_D^{-}$. We have
\[
\| \mathscr{S}_Dg_k-\mathscr{S}_Dg_\ell\|_{\mathcal{H}_D}=(\mathcal{S}_Dg_k-\mathcal{S}_Dg_\ell |g_k-g_\ell )_{L^2(\partial D)}.
\]
Hence  $(\mathscr{S}_Dg_k)$ is a Cauchy sequence in $\mathscr{H}_D$ which is complete with respect to the norm $\|\cdot \|_{\mathcal{H}_D}$. The limit $u$ of the sequence $(\mathscr{S}_Dg_k)$ satisfies $u_{|\partial D}=\mathcal{S}_Dg=f$ by the continuity of the trace map. Whence $f\in H^{1/2}(\partial D)$.
\qed
\end{proof}

As byproduct of the preceding proof we see that $\mathscr{S}_D$ is extended as a bounded operator from $H^{-1/2}(\partial D)$ onto $\mathscr{H}_D$ by setting
\[
\mathscr{S}_Df=\lim_k\mathscr{S}_Df_k,\quad f\in H^{-1/2}(\partial D ),
\]
where $(f_k)$ is an arbitrary sequence in $L^2(\partial D )$ converging to $f$ in $H^{-1/2}(\partial D)$. Moreover, we have
\[
u=\mathscr{S}_D\mathcal{S}_D^{-1}(u_{|\partial D}),\quad u\in \mathscr{H}_D.
\]

Introduce the double layer type operator
\[
\mathscr{D}_Df(x)=\int_{\partial D}\partial_{\nu (y)}N(x,y)f(y)d\sigma (y),\quad f\in L^2(\partial D),\; x\in \Omega \setminus \partial D.
\]

From \cite[Theorem 2.4, page 16]{AK}, we easily obtain, where $f\in L^2(D)$,
\begin{equation}\label{5.12}
\mathscr{D}_Df_{|\pm }=\left(\mp 1/2+\mathscr{K}_D\right)f\quad \mbox{a.e. on}\; \partial D.
\end{equation}

As in \cite[Lemma 2, page 154]{KPS}, our spectral analysis is based on the Plemelj's symmetrization principle. We have

\begin{lemma}\label{lemma5.3}
For $\mathcal{S}_D,\mathscr{K}_D:L^2(\partial D)\rightarrow L^2(\partial D)$, the following identity holds
\[
\mathscr{K}_D\mathcal{S}_D=\mathcal{S}_D\mathscr{K}_D^\ast.
\]
\end{lemma}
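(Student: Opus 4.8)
The plan is to derive the identity from Green's second identity applied to single--layer potentials in the interior domain $D$, using only the symmetry of the Neumann--Green function, the adjointness already built into $\mathscr{K}_D^\ast$, and the jump relation \eqref{a4}. I work throughout with the real bilinear $L^2(\partial D)$ pairing $(\phi,\psi)=\int_{\partial D}\phi\psi\,d\sigma$; since every kernel in sight is real, proving the operator identity on real data extends to all of $L^2(\partial D)$ by linearity, and with respect to this pairing $\mathcal{S}_D$ is symmetric (because $N(x,y)=N(y,x)$) while $\mathscr{K}_D$ and $\mathscr{K}_D^\ast$ are mutually transpose.

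First I would fix $f,g\in L^2(\partial D)$ and set $u=\mathscr{S}_Df$ and $v=\mathscr{S}_Dg$. Both are harmonic in $D$ and lie in $H^1(\Omega)$, so their interior traces $u_-,v_-$ belong to $H^1(D)$ and their interior conormal derivatives exist in $L^2(\partial D)$. As $\mathscr{S}_D$ has no jump across $\partial D$, we have $u_-|_{\partial D}=\mathcal{S}_Df$ and $v_-|_{\partial D}=\mathcal{S}_Dg$, while \eqref{a4}, read on the interior side (the ``$-$'' side of the convention $\partial_\nu w_{|\pm}=\lim_{t\searrow 0}\nabla w(x\pm t\nu)\cdot\nu$), gives
\[
\partial_\nu u_-|_- = \Big(-\tfrac12+\mathscr{K}_D^\ast\Big)f,\qquad \partial_\nu v_-|_- = \Big(-\tfrac12+\mathscr{K}_D^\ast\Big)g .
\]

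Next I would invoke Green's second identity for the harmonic pair $u_-,v_-$ in $D$; in the generalized form used in \eqref{5.6} the bulk terms cancel and one is left with
\[
\int_{\partial D}\Big(u_-\,\partial_\nu v_-|_- - v_-\,\partial_\nu u_-|_-\Big)\,d\sigma = 0 .
\]
Substituting the boundary data and expanding produces four terms. The two carrying the factor $\tfrac12$ combine to $-\tfrac12(\mathcal{S}_Df,g)+\tfrac12(\mathcal{S}_Dg,f)$, which vanishes by the symmetry of $\mathcal{S}_D$, leaving exactly $(\mathcal{S}_Df,\mathscr{K}_D^\ast g)=(\mathcal{S}_Dg,\mathscr{K}_D^\ast f)$. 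I would then test against an arbitrary function: moving $\mathcal{S}_D$ across on the left (symmetry) and $\mathscr{K}_D^\ast$ across on the right (transpose) turns this into $(f,\mathcal{S}_D\mathscr{K}_D^\ast g)=(f,\mathscr{K}_D\mathcal{S}_Dg)$ for all $f,g$, whence $\mathscr{K}_D\mathcal{S}_D=\mathcal{S}_D\mathscr{K}_D^\ast$.

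The only genuinely delicate point is justifying Green's identity at this regularity: $u_-,v_-\in H^1(D)$ are harmonic with conormal traces that are a priori only in $H^{-1/2}(\partial D)$, so the identity must first be read through the $H^{1/2}$--$H^{-1/2}$ duality as in \eqref{5.6}, after which the pairings collapse to honest $L^2(\partial D)$ integrals precisely because \eqref{a4} asserts the conormal derivatives are in $L^2(\partial D)$. Alongside this one must keep careful track of the sign convention, namely that the ``$-$'' trace in \eqref{a4} is the one belonging to the interior of $D$. I do not expect to need the exterior region $\Omega\setminus\overline D$: the Neumann condition on $\partial\Omega$ is already encoded in $N$, so the interior Green identity alone closes the argument.
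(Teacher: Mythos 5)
Your argument is correct, and it is a genuinely different route from the paper's. The paper proves the symmetrization identity at the level of kernels and potentials: it forms the double layer potential $\mathscr{D}_D\mathcal{S}_Df$ for $x\in\Omega\setminus\partial D$, uses Green's formula inside $D$ applied to the two kernels $N(x,\cdot)$ and $N(\cdot,z)$ to rewrite it as $\mathscr{S}_D[\partial_\nu\mathscr{S}_Df_{|-}]$, and then takes the exterior trace, invoking \emph{both} jump relations \eqref{a4} and \eqref{5.12} so that the $-\tfrac12$ terms cancel and $\mathscr{K}_D\mathcal{S}_Df=\mathcal{S}_D\mathscr{K}_D^\ast f$ drops out; density of $\mathscr{D}(\partial D)$ finishes it. You instead apply Green's second identity to the two potentials $\mathscr{S}_Df$, $\mathscr{S}_Dg$ themselves in $D$ and read the result as a bilinear-form identity, which needs only the single-layer jump relation \eqref{a4}, the symmetry $N(x,y)=N(y,x)$, and the transpose relation between $\mathscr{K}_D$ and $\mathscr{K}_D^\ast$ — the double layer $\mathscr{D}_D$ and its jump formula \eqref{5.12} never appear. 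What your version buys is that it sidesteps the slightly delicate step in the paper where Green's formula is applied to $N(\cdot,z)$ with $z\in\partial D$ (a kernel singular \emph{on} the boundary of the domain of integration, which the paper passes over by working first with $f\in\mathscr{D}(\partial D)$); what it costs is the justification of Green's identity at $H^1$ regularity, which you correctly route through the $H^{1/2}$--$H^{-1/2}$ duality already set up in \eqref{5.6}--\eqref{5.7}, the pairings then collapsing to $L^2(\partial D)$ integrals because \eqref{a4} places the one-sided conormal derivatives in $L^2(\partial D)$. Your sign bookkeeping is right ($\partial_\nu\mathscr{S}_Df_{|-}=(-\tfrac12+\mathscr{K}_D^\ast)f$ on the interior side), the $\tfrac12$ terms do cancel by symmetry of $\mathcal{S}_D$, and the final duality step from $(\mathcal{S}_Df|\mathscr{K}_D^\ast g)=(\mathcal{S}_Dg|\mathscr{K}_D^\ast f)$ to the operator identity is sound; the restriction to the real bilinear pairing is harmless since all kernels are real.
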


\begin{proof}
For $f\in \mathscr{D}(\partial D)$ and $x\in \Omega\setminus \partial D$, we have
\begin{align*}
\mathscr{D}_D\mathcal{S}_Df(x)&=\int_{\partial D}\partial_{\nu (y)}N(x,y)\left( \int_{\partial D}N(y,z)f(z)d\sigma(z)\right)d\sigma (y)
\\
&=\int_{\partial D}\left(\int_{\partial D}\partial_{\nu (y)}N(x,y)N(y,z)d\sigma (y)\right)f(z)d\sigma(z).
\end{align*}
As $N(x,\cdot )=N(\cdot ,x)$ and $N(\cdot ,z)$ are harmonic in $D$, Green's formula yields
\begin{align*}
\mathscr{D}_D\mathcal{S}_Df(x)&=\int_{\partial D}\left(\int_{\partial D}N(x,y)\partial_{\nu (y)}N(y,z)d\sigma (y)\right)f(z)d\sigma(z)
\\
&=\int_{\partial D}N(x,y)\partial_{\nu (y)}\left(\int_{\partial D}N(y,z)f(z)d\sigma (z)\right)d\sigma(y)
\\
&=\mathscr{S}_D[\partial_{\nu (y)}\mathscr{S}_Df_{|-}].
\end{align*}
In light of the jump conditions in \eqref{a4} and \eqref{5.12} we get
\[
\mathscr{D}_D\mathcal{S}_Df_{|+}=\left(-1/2+\mathscr{K}_D\right)\mathcal{S}_Df=\mathscr{S}_D[\partial_{\nu (y)}\mathscr{S}_Df_{|-}]_{|+}=\mathcal{S}_D(-1/2+\mathscr{K}_D^\ast)f.
\]
This and the fact that $\mathscr{D}(\partial D)$ is dense in $L^2(\partial D)$ yield the expected inequality.
\qed
\end{proof}

We recall the definition of the Shatten class, sometimes called also Shatten-von Neumann class. To this end, we consider a complex Hilbert space $H$. If $T:H\rightarrow H$ is a compact operator  and if $T^\ast :H\rightarrow H$ denotes its adjoint, then $|T|:=\sqrt{T^\ast T}:H\rightarrow H$ is positive and compact and therefore it is diagonalizable. The non-negative sequence $(s_k(T))_{k\ge 1}$ of eigenvalues of $|T|$ are usually called the singular values of $T$. For $1\le p<\infty$, if
\[
\sum_{k=1}^\infty \left[s_k(T)\right]^p<\infty
\]
we say that $T$ belongs to the Shatten class $S_p(H)$. It is worth mentioning that $S_p(H)$ is an ideal of $\mathscr{B}(H)$, $S_1(H)$ is known as the trace class and $S_2(H)$ corresponds to the Hilbert-Schmidt class.

\begin{lemma}\label{lemma5.4}
We have $\mathscr{K}_D\in S_p(L^2(\partial D))$, for any $p\ge (n-1)/\alpha$.
\end{lemma}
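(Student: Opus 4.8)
The plan is to reduce the assertion to a size estimate for the integral kernel and then feed it into a Schatten-class criterion for weakly singular operators on the compact $(n-1)$-dimensional manifold $\partial D$. Since $\mathscr{K}_D=K_D+K_{D,0}$ and the kernel $\partial_{\nu(y)}R$ of $K_{D,0}$ is $C^\infty$ on $\partial D\times\partial D$ (because $R\in C^\infty(\Omega\times\Omega)$), the operator $K_{D,0}$ has a smooth kernel on a compact manifold and hence belongs to every Schatten class, in particular to $S_1$. As $S_p$ is a vector space, it therefore suffices to prove $K_D\in S_p$ for every $p\ge (n-1)/\alpha$. First I would record the singularity of the kernel of $K_D$: exactly as in the derivation of \eqref{e1}, the slight modification of \cite[Lemma 3.15]{Fo}, now applied at the point $y\in\partial D$ where $\partial D$ is of class $C^{1,\alpha}$, gives $|(x-y)\cdot\nu(y)|\le C|x-y|^{1+\alpha}$ and hence
\[
\left|\frac{(x-y)\cdot\nu(y)}{|x-y|^{n}}\right|\le \frac{C}{|x-y|^{\,n-1-\alpha}},\qquad x,y\in\partial D,\ x\ne y.
\]
Thus $K_D$ is an integral operator on the $(n-1)$-dimensional compact set $\partial D$ whose kernel has a weak singularity of order $(n-1)-\alpha$, that is, a smoothing gain of $\alpha$ below the critical exponent $n-1$.

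The main input is then the Birman--Solomyak estimate for singular numbers of such operators: on a compact $d$-dimensional manifold, an integral operator with kernel bounded by $C|x-y|^{-(d-\beta)}$, $0<\beta<d$, has approximation (singular) numbers satisfying $s_k\le C\,k^{-\beta/d}$. With $d=n-1$ and $\beta=\alpha$ this yields $s_k(K_D)\le C\,k^{-\alpha/(n-1)}$, so that
\[
\sum_{k\ge 1} s_k(K_D)^p\le C\sum_{k\ge 1} k^{-p\alpha/(n-1)}<\infty
\]
as soon as $p\alpha/(n-1)>1$, i.e. for every $p>(n-1)/\alpha$. Concretely this bound is obtained by the usual dyadic decomposition of the kernel around the diagonal, where the part within distance $2^{-j}$ of the diagonal is small in operator norm (its kernel is integrable since $n-1-\alpha<n-1$), while the complementary part is regular enough to be approximated by finite-rank operators with controlled error. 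An alternative, more elementary route is iteration: by the standard convolution estimate for weakly singular kernels on $\partial D$, each composition lowers the singularity order by $\alpha$, so that once $m\alpha>n-1$ the kernel of $K_D^{\,m}$ is bounded, whence $K_D^{\,m}$ is Hilbert--Schmidt.

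I expect the endpoint to be the delicate point. The crude kernel bound only places $K_D$ in the weak Schatten class $S_{(n-1)/\alpha,\infty}$, and since $S_{q,\infty}\subset S_p$ exactly for $p>q$, capturing the borderline $p=(n-1)/\alpha$ itself is the genuine obstacle: it requires either a strictly better-than-power decay of the singular numbers or the sharp form of the Birman--Solomyak estimate. The iteration route has a companion difficulty, namely that the implication ``$K_D^{\,m}\in S_2\Rightarrow K_D\in S_{2m}$'' fails for a general non-normal compact operator; here it can nonetheless be justified by exploiting the symmetrizability of $\mathscr{K}_D$ furnished by Lemma \ref{lemma5.3}, which makes $\mathcal{S}_D^{-1/2}\mathscr{K}_D\mathcal{S}_D^{1/2}$ self-adjoint, so that the eigenvalues of $\mathscr{K}_D$ are real and Weyl's inequalities transfer the $\ell^{2m}$-summability of the eigenvalues into the desired membership. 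The kernel estimate and the weak-singularity bookkeeping are otherwise routine.
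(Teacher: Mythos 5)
Your route is genuinely different from the paper's. Both arguments rest on the same starting point, namely the kernel bound $|L(x,y)|\le C|x-y|^{-(n-1)+\alpha}$ coming from the $C^{1,\alpha}$ regularity of $\partial D$ via the modification of \cite[Lemma 3.15]{Fo}; but from there the paper neither decomposes $\mathscr{K}_D$ nor uses singular-value asymptotics. It observes that the full kernel $\ell$ of $\mathscr{K}_D$ lies in the mixed-norm space $L^\infty_x(\partial D;L^q_y(\partial D))$ for $1<q\le \alpha/(n-1-\alpha)$ and then invokes Russo's Hausdorff--Young theorem for integral operators \cite[Theorem 1]{Ru}, which places $\mathscr{K}_D$ directly in $S_p$ with $p=q/(q-1)$. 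This is a one-line reduction once the kernel estimate is in hand, whereas your Birman--Solomyak route needs the dyadic approximation-number machinery (or, in the iteration variant, the symmetrization furnished by Lemma \ref{lemma5.3} to pass from powers of the operator back to the operator itself, which you correctly identify as indispensable for a non-normal operator). What your approach buys is an explicit decay rate $s_k(K_D)=O\left(k^{-\alpha/(n-1)}\right)$, which is strictly more information than bare Schatten membership; what the paper's approach buys is brevity.

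Two remarks on accuracy. First, your side claim that $K_{D,0}\in S_1$ because $R$ is smooth is too quick: the kernel of $K_{D,0}$ is $\partial_{\nu(y)}R(x,y)$, and $\nu$ is only $C^{0,\alpha}$ on a $C^{1,\alpha}$ boundary, so this kernel is merely H\"older continuous on $\partial D\times\partial D$, and trace-class membership does not follow from boundedness or continuity of a kernel. This is harmless here, because the bounded part is dominated by the weak-singularity bound and the splitting is unnecessary (the paper simply estimates the whole kernel $\ell$ at once), but as stated it is an over-claim. Second, the endpoint: you are right that your main estimate only yields $p>(n-1)/\alpha$, and since the lemma asserts membership for all $p\ge(n-1)/\alpha$ while $S_p\subset S_{p'}$ holds only for $p\le p'$, the case $p=(n-1)/\alpha$ is a genuine gap in your write-up as it stands. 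Be aware, though, that the paper's own bookkeeping has the same character: the restriction $q\le\alpha/(n-1-\alpha)$ yields $p=q/(q-1)\ge\alpha/(2\alpha-(n-1))$, and $\alpha/(2\alpha-(n-1))=(n-1)/\alpha$ only when $\alpha=n-1$, so the stated endpoint is not literally reached there either; optimizing $q$ up to the critical integrability exponent $(n-1)/(n-1-\alpha)$ again produces only the open range $p>(n-1)/\alpha$. Your honest flagging of the borderline case therefore points at a real, if minor, imprecision shared by the published proof, and the statement both arguments actually deliver is $\mathscr{K}_D\in S_p$ for every $p>(n-1)/\alpha$.
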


\begin{proof}
We first note that $\ell$, the kernel of $\mathscr{K}_D$, satisfies
\begin{equation}\label{5.15}
\ell (x,y) =O\left(|x-y|^{-(n-1)+\alpha}\right).
\end{equation}
Let $\omega$ be an open subset of $\mathbb{R}^{n-1}$ and $\rho$ continuous on $\omega \times \omega\setminus\{(x,x);\; x\in \omega\}$ and $\rho (x,y)=O\left(|x-y|^{-(n-1)+\alpha}\right)$, $x,y\in \omega$.

Let $1<q\le \alpha /(n-1-\alpha )$. Then elementary computations show that
\[
\sup_{x\in \omega}\int_\omega |\rho(x,y)|^qdx<\infty.
\]
From this we get, in light of \eqref{5.15} and using local cards and partition of unity, that $\ell \in L_x^\infty (\partial D;L_y^q(\partial D))$. Consequently, according to \cite[Theorem 1]{Ru}, $\mathscr{K}_D$ belogns to $S_{p}(L^2(\partial D))$, where $p=q/(q-1)$ is the conjugate exponent of $q$. We complete the proof by noting that the condition on $q$ yields $p\ge (n-1)/\alpha$.
\qed
\end{proof}

We remark that \cite[Theorem 4.2]{KPS} is obtained as an immediate consequence of  the abstract theorem \cite[Theorem 3.1]{KPS}. Since all the assumptions of \cite[Theorem 3.1]{KPS} hold in our case, we get, similarly to \cite[Theorem 4.2]{KPS},  Theorem \ref{theorem5.1}.

In Theorem \ref{theorem5.1} we anticipated the regularity of the extremal functions $g_{D,j}^\pm$. This result can be proved by following the same arguments as in \cite[page 164]{KPS}.

Following \cite{KPS}, $\lambda_j^\pm(D)$, $j\ge 1$, are called the eigenvalues of the spectral variational Poincar\'e problem: find those $\lambda$'s for which there exists $u\in \mathscr{H}_D$, $u\ne 0$, so that
\[
\int_{\Omega \setminus\overline{D}}\nabla u\cdot \overline{\nabla v}dx-\int_D\nabla u\cdot \overline{\nabla v}dx=\lambda \int_\Omega\nabla u\cdot \overline{\nabla v}dx,\; v\in \mathscr{H}_D.
\]
As we already mentioned in Section 5, this  variational eigenvalue problem is correlated to the eigenvalue problem for the NP operator $\mathscr{K}_D$. To see this, we observe that as straightforward consequence of \eqref{5.6}, \eqref{5.7} together with the jump condition \eqref{e2}, we have
\begin{align*}
\int_{\Omega \setminus\overline{D}}\nabla \mathscr{S}_Dg\cdot \overline{\nabla \mathscr{S}_Dh}dx&-\int_D\nabla \mathscr{S}_Dg\cdot \overline{\nabla \mathscr{S}_Dh}dx
\\
&= -2(\mathscr{K}_D^\ast \mathcal{S}_Dg|\mathcal{S}_Dh)_{L^2(\partial D)}=-2( \mathcal{S}_Dg|\mathscr{K}_D\mathcal{S}_Dh)_{L^2(\partial D)}.
\end{align*}
These identities are first established for $g,h\in L^2(\partial D)$ and then extended by density to $g,h\in H^{-1/2}(\partial D)$. Thus, we have in particular
\[
\frac{\|\nabla \mathscr{S}_Dg\|_{L^2(\Omega \setminus\overline{D})}^2-\|\nabla \mathscr{S}_Dg\|_{L^2(D)}^2}{\|\nabla \mathscr{S}_Dg\|_{L^2(\Omega )}^2}=\frac{-2( \mathcal{S}_Dg|\mathscr{K}_D\mathcal{S}_Dg)_{L^2(\partial D)}}{(\mathcal{S}_Dg|g)_{L^2(\partial D)}},
\]
with $g\in H^{-1/2}(\partial D)$.

These comments enable us to deduce Corollary \ref{corollary5.1} from Theorem \ref{theorem5.1}.

\section*{Acknowledgments}
CJ is supported by NSFC (key projects no.11331004, no.11421110002) and the Programme of Introducing Talents of Discipline to Universities (number B08018). MC is supported by the grant ANR-17-CE40-0029 of the French National Research Agency ANR (project MultiOnde). LS is supported by NSFC (No.91730304), Shanghai Municipal Education Commission (No.16SG01) and Special Funds for Major State Basic Research Projects of China (2015CB856003). This work started during the stay of MC at Fudan University on February 2017. He warmly thanks Fudan University for hospitality.

\end{document}